\documentclass[12pt,leqno]{amsart}
\usepackage[a4paper,top=2cm,bottom=2cm,left=2cm,right=2cm]{geometry}
\usepackage{amssymb,amsmath}
\usepackage{subfig}
\usepackage[english]{babel}
\usepackage{graphicx}
\usepackage{tabularx}
\usepackage{enumerate, hyperref}

\def\ZZ{\mathbb Z}

\def\R{\mathbb R}

\def\F{\mathcal F}

\newtheorem{theorem}{Theorem}
\newtheorem{lemma}[theorem]{Lemma}

\newtheorem{proposition}[theorem]{Proposition}
\newtheorem{corollary}[theorem]{Corollary}

\title{Kadec-$1/4$ Theorem for Sinc Bases.}
\author{Antonio Avantaggiati, Paola Loreti, Pierluigi Vellucci}

\begin{document}
\maketitle

\begin{abstract}
In this paper we show two results. 
In the first result we consider
$\lambda_n-n=\frac{A}{n^\alpha}$ for $n\in\mathbb N$; if $\alpha>1/2$ and $0<A<\frac{1}{\pi\sqrt{2 \sqrt{2}\zeta(2\alpha)}}$, the system $\left\{\operatorname{sinc}( \lambda_n - t)\right\}_{n\in\mathbb N}$ is a Riesz basis for $PW_{\pi}$.
With the second result, we study the stability of $\left\{\operatorname{sinc}( \lambda_n - t)\right\}_{n\in\ZZ}$ for $\lambda_n\in\mathbb C$; if $|\lambda_n-n|\leqq L<\frac{1}{\pi}\, \sqrt\frac{3\alpha}{8}$, for all $n\in\mathbb Z$, then $\{\operatorname{sinc}(\lambda_n-t)\}_{n\in\mathbb Z}$ forms a Riesz basis for $PW_{\pi}$. Here $\alpha$ is the Lamb-Oseen constant.
\end{abstract}

\tableofcontents

\section{Introduction}
\label{sec:intro}

Let $f$ be a function which can be expanded as
\begin{equation}
\label{eq:6}
f(t)=\sum_{n\in\mathbb Z} c_n \operatorname{sinc} (t-n)
\end{equation}
where
\begin{equation}
\label{eq:sinc}
 {\operatorname{sinc}}(\alpha)=
   \begin{cases}
   \frac{ \sin(\pi \alpha)}{\pi \alpha}  \qquad &\alpha \not= 0,\\
   1\qquad & \alpha=0,
    \end{cases}
\end{equation}
is the \emph{normalized sinc function}. The RHS of (\ref{eq:6}) is called ``cardinal series'' or \emph{Whittaker cardinal series}. A major factor affecting current interest in the cardinal series is its importance for certain applications as, for example, interpolation based on (\ref{eq:6}) which is usually called \emph{ideal bandlimited interpolation} (or sinc interpolation), because it provides a perfect reconstruction for all t, if $f(t)$ is bandlimited in $[-\pi,\pi]$ and if the sampling frequency is greater that the so-called \emph{Nyquist rate}. The system used to implement (\ref{eq:6}) is also known in in engineering applications as  \emph{ideal DAC} (i.e. digital-to-analog converter, see \cite{Man11}). The presence of the perturbation could lose the correct reconstruction of the function (signal), so it is important to study the conditions for which the system is still able to reconstruct the function (signal) belonging to a given space.
Other applications are sampling theory of band-limited signals in communications engineering \cite{Hig85} or sinc-quadrature method for differential equations \cite{Lu92}. The so-called \emph{sinc numerical methods of computation}, provide procedures for function approximation over bounded or unbounded regions, encompassing interpolation, approximation of derivatives, approximate definite and indefinite integration, and so on \cite{Ste00}. These problems motivated our investigation on sinc systems.
\begin{figure}[tb]
\centering
\includegraphics[scale=0.40]{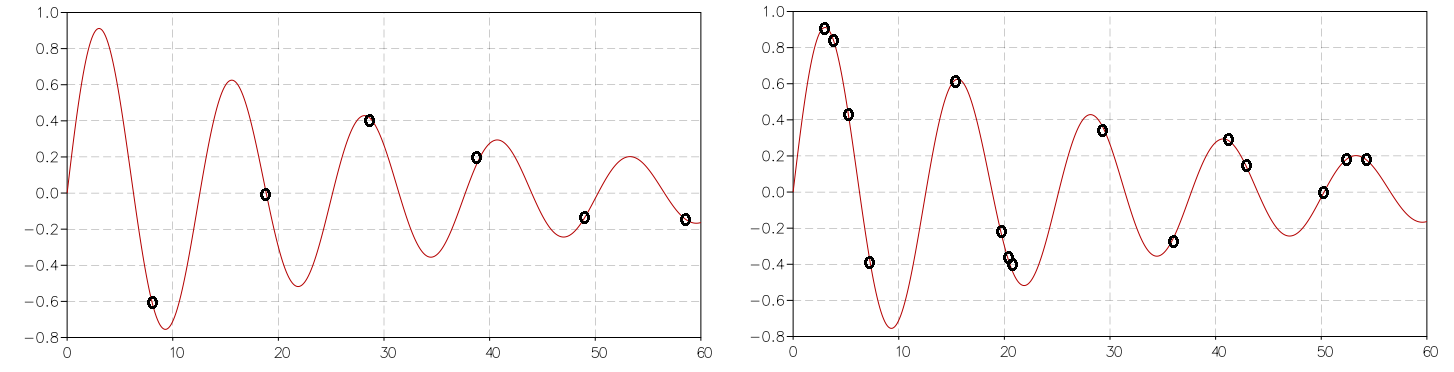}
\caption{Left: A function $f$ defined on $\mathbb R$ has been sampled on a uniformly spaced set. Right: The same function f has been sampled on a non-uniformly spaced set.}
\label{fig:5}
\end{figure}
\begin{figure}[tb]
\centering
\includegraphics[scale=0.4]{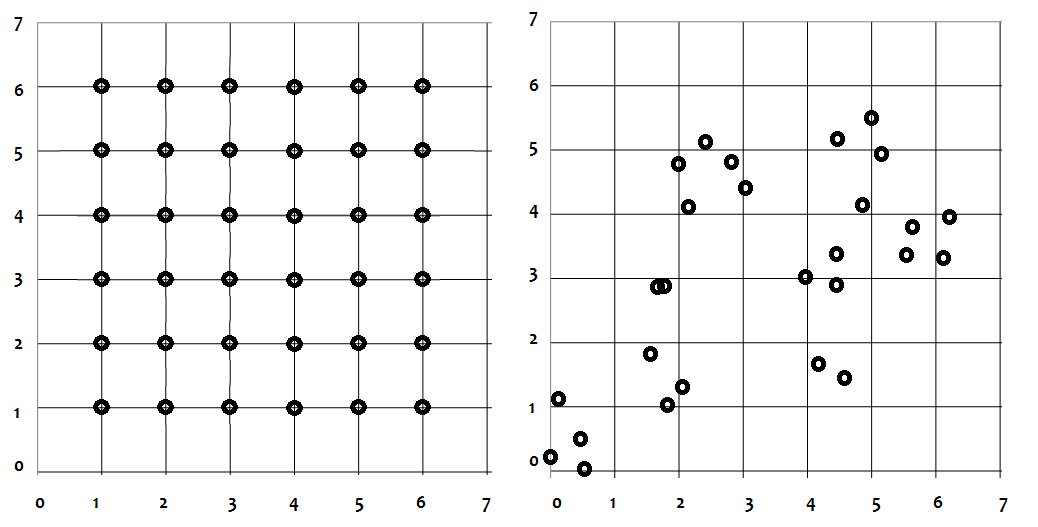}
\caption{Sampling grids. Left: uniform cartesian sampling. Right: A typical nonuniform sampling set as encountered in various signal and image processing applications.}
\label{fig:6}
\end{figure}

For these reasons, the cardinal series have been widely discussed in the literature; see also \cite{Whi15} and \cite{Whi64}. They are linked to a classical basis, the exponentials $\{e^{in t}\}_{n\in \mathbb Z}$ in $L^2(-\pi,\pi)$, through the \emph{Fourier transform}, indeed formally
$$\F\left(e^{i t\mu}\chi_{[-\pi,\pi]}\left(t\right)\right)(\xi)=\int_{-\pi}^\pi e^{i(\mu-\xi)t} dt=2\pi\operatorname{sinc}(\mu-\xi).$$
Studies on more general exponential systems $\{e^{i\lambda_n t}\}_{n\in \mathbb Z}$ find their origin in the celebrated 1934's work of Paley and Wiener \cite{Pal} in $L^2(0, T)$, where $T > 0$. They proved that if $\lambda_n\in \mathbb R, n\in \mathbb Z$ and
$$|\lambda_n-n|\leq L< \pi^{-2} \ \ \ \ n\in \mathbb Z$$
then the system $\{e^{i\lambda_n t} \}_{n\in \mathbb Z}$ forms a Riesz basis in $L^2[-\pi,\pi]$. A well-known theorem by Kadec \cite{Kad}, \cite{You} shows that $1/4$ is a stability bound for the exponential basis on $L^2(-\pi,\pi)$, in the sense that for $L<1/4$, $\{e^{i\lambda_n t} \}_{n\in \mathbb Z}$ is still a Riesz basis in $L^2(-\pi,\pi)$. More than 60 years after Paley and Wiener initiated the study of nonharmonic Fourier series in $L^2[-\pi,\pi]$, many other approaches to exponential Riesz basis problem have emerged in the literature. For other contributions to exponential Riesz basis problem and Kadec's theorem see survey papers, as: \cite{Sed}, \cite{Ul}.

For the system of cardinal sines $\{\operatorname{sinc}(t-n) \}_{n\in \mathbb Z}$ we tried to follow the same approach. For simplicity, we refer to $\{\operatorname{sinc}(t-n) \}_{n\in \mathbb Z}$ with terms of ``sinc system'' or \emph{sinc basis}.

The results of this paper are the following theorems.

First of all, we recall and prove the classical result (see also \cite{Ava}). Below, we denote with $PW_{\pi}$ the \emph{Paley-Wiener space}.
\begin{proposition}
\label{t:alv}
Let $\{\lambda_n\}_{n\in\ZZ}$ be a sequence of real numbers for which
\begin{equation}
|\lambda_n-n|\leqq L<\infty, \ \ n=0, \pm 1, \pm 2, ...
\end{equation}
If $L<\frac 14$, the sequence $\left\{\operatorname{sinc}( \lambda_n - t)\right\}_{n\in\mathbb Z}$ satisfies the Paley-Wiener criterion and so forms a Riesz basis for $PW_{\pi}$. Moreover, constant $1/4$ is optimal.
\end{proposition}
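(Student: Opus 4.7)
The plan is to transfer the problem from the Paley--Wiener space $PW_\pi$ to $L^2(-\pi,\pi)$ via the Fourier transform and then invoke the classical Kadec $1/4$ theorem for nonharmonic exponentials. The identity already displayed in the introduction,
$$\F\bigl(e^{it\mu}\chi_{[-\pi,\pi]}(t)\bigr)(\xi)=2\pi\operatorname{sinc}(\mu-\xi),$$
together with the fact that $\F$ (suitably normalized) is a unitary isomorphism between $L^2(-\pi,\pi)$ and $PW_\pi$, says exactly that the system $\{\operatorname{sinc}(\lambda_n-t)\}_{n\in\ZZ}\subset PW_\pi$ corresponds, up to a fixed multiplicative constant, to the system of exponentials $\{e^{i\lambda_n\xi}\}_{n\in\ZZ}\subset L^2(-\pi,\pi)$.

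The first step is to make this correspondence precise: identify the map $e^{i\lambda_n\xi}\mapsto \operatorname{sinc}(\lambda_n-t)$ (times the universal constant $2\pi$ coming from $\F$) and observe that it is a Hilbert space isomorphism. Since a Riesz basis is preserved by any bounded invertible operator, and in particular by a unitary one, the question of whether $\{\operatorname{sinc}(\lambda_n-t)\}_{n\in\ZZ}$ is a Riesz basis in $PW_\pi$ is equivalent to whether $\{e^{i\lambda_n\xi}\}_{n\in\ZZ}$ is one in $L^2(-\pi,\pi)$, with identical Riesz bounds.

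The second step invokes Kadec's theorem directly: under the hypothesis $|\lambda_n-n|\leqq L<1/4$, the system $\{e^{i\lambda_n\xi}\}_{n\in\ZZ}$ is a Riesz basis for $L^2(-\pi,\pi)$, hence fits the Paley--Wiener criterion. By the transfer argument above, $\{\operatorname{sinc}(\lambda_n-t)\}_{n\in\ZZ}$ inherits the same property in $PW_\pi$.

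Finally, for optimality I would appeal to Levinson's classical example of a real sequence $\{\lambda_n\}$ with $|\lambda_n-n|=1/4$ for which $\{e^{i\lambda_n\xi}\}_{n\in\ZZ}$ fails to be a Riesz basis in $L^2(-\pi,\pi)$; pulling this example back under $\F^{-1}$ produces a corresponding $\operatorname{sinc}$ system that is not a Riesz basis in $PW_\pi$, showing that the constant $1/4$ cannot be improved. There is no real obstacle in this proof beyond bookkeeping of the normalizing constants in the Fourier isometry; the only delicate point is to make sure that ``Riesz basis'' is used with the same definition (equivalent-norm form or biorthogonal form) on both sides so that Kadec's theorem applies verbatim.
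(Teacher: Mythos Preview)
Your approach is essentially the same as the paper's: use the Fourier identity $\F(e^{it\mu}\chi_{[-\pi,\pi]})(\xi)=2\pi\operatorname{sinc}(\mu-\xi)$ and Plancherel to reduce the sinc system in $PW_\pi$ to the exponential system $\{e^{i\lambda_n t}\}$ in $L^2(-\pi,\pi)$, then appeal to Kadec's theorem for the bound and to the classical $\lambda_n=n\pm 1/4$ counterexample (attributed in the paper to Ingham rather than Levinson) for optimality. The only cosmetic difference is that the paper verifies the Paley--Wiener criterion explicitly, quoting the Kadec bound $1-\cos(\pi L)+\sin(\pi L)<1$, whereas you invoke preservation of Riesz bases under the unitary $\F$ and cite Kadec as a black box.
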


The subsequent results have been achieved in an attempt to reobtain the optimal constant $1/4$ without going to the exponential basis, i.e. working directly on cardinal series. Let us consider the following two results.
\begin{theorem}
\label{t:alv11}
Let $\lambda_n-n=\frac{A}{n^\alpha}$ for $n=1,2,3,\dots$. If $\alpha>1/2$ and $0<A<\frac{1}{\pi\sqrt{2 \sqrt{2}\zeta(2\alpha)}}$ then the system $\left\{\operatorname{sinc}( \lambda_n - t)\right\}_{n\in\mathbb N}$ satisfies the Paley-Wiener criterion and so forms a Riesz basis for $PW_{\pi}$.
\end{theorem}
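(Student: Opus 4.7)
\medskip

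\noindent\textbf{Proof plan.}
The plan is to invoke the Paley--Wiener stability criterion just used in Proposition~\ref{t:alv}: it suffices to exhibit a constant $\gamma<1$ such that
\[
\Bigl\|\sum_{n\in\NN} c_n \bigl[\operatorname{sinc}(t-n)-\operatorname{sinc}(t-\lambda_n)\bigr]\Bigr\|_{PW_\pi}
\;\le\;\gamma\,\Bigl\|\sum_{n\in\NN} c_n\operatorname{sinc}(t-n)\Bigr\|_{PW_\pi}
\]
for every finitely supported sequence $(c_n)$. Because $\{\operatorname{sinc}(t-n)\}$ is orthonormal in $PW_\pi$, the right-hand side equals $\gamma\,\bigl(\sum|c_n|^2\bigr)^{1/2}$, so the task reduces to a single Bessel-type estimate on the perturbation.

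To compute the left-hand side, I would transfer the problem to $L^2[-\pi,\pi]$ through the Fourier isometry recalled in the introduction, under which $\operatorname{sinc}(t-n)\longleftrightarrow e^{-in\xi}/\sqrt{2\pi}$. Writing $\delta_n:=\lambda_n-n=A/n^\alpha$, one has
\[
\Bigl\|\sum c_n[\operatorname{sinc}(t-n)-\operatorname{sinc}(t-\lambda_n)]\Bigr\|_{PW_\pi}^2
=\frac{1}{2\pi}\int_{-\pi}^{\pi}\Bigl|\sum c_n\,e^{-in\xi}\bigl(1-e^{-i\delta_n\xi}\bigr)\Bigr|^2 d\xi .
\]
Applying the Cauchy--Schwarz inequality pointwise in $\xi$ to the sum over $n$, then integrating and using
$\bigl|1-e^{-i\delta_n\xi}\bigr|^2=4\sin^2(\delta_n\xi/2)=2(1-\cos(\delta_n\xi))$, a direct computation of the $\xi$-integral yields
\[
\Bigl\|\sum c_n[\operatorname{sinc}(t-n)-\operatorname{sinc}(t-\lambda_n)]\Bigr\|_{PW_\pi}^2
\;\le\;2\Bigl(\sum_{n}|c_n|^2\Bigr)\sum_{n}\bigl[1-\operatorname{sinc}(\delta_n)\bigr].
\]
At this stage one plugs in the elementary bound $1-\operatorname{sinc}(x)\le c_0\,\pi^2 x^2$ (valid for all $x$, with a constant $c_0$ that the authors have chosen so as to produce the factor $2\sqrt{2}$), combined with the identity $\sum_{n\ge1}\delta_n^2=A^2\zeta(2\alpha)$, which requires precisely the hypothesis $\alpha>1/2$ to be finite.

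Putting these pieces together gives $\gamma^2\le 2\sqrt{2}\,\pi^2 A^2\zeta(2\alpha)$, and the threshold $A<1/\bigl(\pi\sqrt{2\sqrt{2}\,\zeta(2\alpha)}\bigr)$ stated in the theorem is exactly what forces $\gamma<1$; applying the Paley--Wiener stability theorem then concludes that $\{\operatorname{sinc}(\lambda_n-t)\}_{n\in\NN}$ is a Riesz basis for $PW_\pi$. I expect the main obstacle to be the sharp scalar inequality $1-\operatorname{sinc}(x)\le c_0\,\pi^2 x^2$ with the right constant $c_0$: the routing through Cauchy--Schwarz is lossy, and obtaining the specific $2\sqrt{2}$ factor (rather than, say, the cruder $\pi^2/6$ one gets from the Taylor expansion of $\operatorname{sinc}$) is what has to be done carefully. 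Aside from this pointwise trigonometric estimate, everything else is a routine assembly of Parseval, Cauchy--Schwarz, and the evaluation of the Riemann zeta sum.
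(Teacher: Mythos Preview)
Your outline is correct and essentially coincides with the paper's argument. The inequality
\[
\Bigl\|\sum_n c_n\bigl[\operatorname{sinc}(\cdot-n)-\operatorname{sinc}(\cdot-\lambda_n)\bigr]\Bigr\|^2\le 2\Bigl(\sum_n|c_n|^2\Bigr)\sum_n\bigl[1-\operatorname{sinc}(\delta_n)\bigr]
\]
is precisely Lemma~\ref{t:alv1}; the paper derives it by expanding each $\operatorname{sinc}(\lambda_n-t)$ in the orthonormal basis $\{\operatorname{sinc}(k-t)\}_{k\in\ZZ}$, applying Parseval, and then Cauchy--Schwarz on the coefficient side, which is Plancherel-dual to your direct integration of $|1-e^{-i\delta_n\xi}|^2$ on the Fourier side. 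Both routes give the same bound.

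For the constant you flag as the only obstacle: the paper does \emph{not} use a global inequality $1-\operatorname{sinc}(x)\le c_0\pi^2 x^2$. Instead, for $u\in(0,\pi/2)$ it writes
\[
1-\frac{\sin u}{u}\;\le\;\frac{1}{\cos u}-1\;=\;\frac{\sin^2 u}{\cos u(1+\cos u)}\;\le\;\frac{\sin^2 u}{\cos u},
\]
and then restricts to $u=\pi A/n^\alpha\in(0,\pi/4]$, so that $\cos u\ge 1/\sqrt{2}$ and hence $1-\operatorname{sinc}(\delta_n)\le\sqrt{2}\,\sin^2(\pi\delta_n)\le\sqrt{2}\,\pi^2\delta_n^2$. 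This produces $c_0=\sqrt{2}$ and the factor $2\sqrt{2}=2c_0$. The range restriction $\pi A\le\pi/4$ is automatically met under the stated hypothesis on $A$, since $1/(\pi\sqrt{2\sqrt{2}\,\zeta(2\alpha)})<1/(\pi\sqrt{2\sqrt{2}})<1/4$.
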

Numerical evaluation in the case $\lambda_n-n=\frac{A}{n^\alpha}$ is given in Section \ref{sec:ex}; in the Tables are showed that, when $\alpha=1$, for increasing value of $A$ until $A\simeq 0.443\dots$, the system $\left\{\operatorname{sinc}( \lambda_n - t)\right\}_{n\in\mathbb N}$ is a Riesz basis in $PW_{\pi}$. If $n=1,2,\dots$, we have that $\lambda_n-n\leq L$ where $L$ is greater ($\simeq0.443\dots$) of Kadec's bound. This is due to the assumption $\lambda_n-n=\frac{A}{n^\alpha}$, that is, to have considered a non-uniform stability bound.

In Section \ref{sec:complex} we study the stability of $\left\{\operatorname{sinc}( \lambda_n - t)\right\}_{n\in\ZZ}$ for $\lambda_n\in\mathbb C$, reobtaining a stability bound which depends from Lamb-Oseen constant \cite{Oseen}. This constant was also appeared in previous work \cite{Ava} although the stability bound was not correct.

In a previous work one of the author studied the extension to complex numbers of Kadec type estimate for exponential bases \cite{Vel}. The method used there is inspired to work by Duffin and Eachus \cite{2:2}. In \cite{Ava} we performed a preliminary study by adacting a previous result on sinc. Here we give a complete result by the following theorem.
\begin{theorem}
\label{t:1}
If $\{\lambda_n\}$ is a sequence of complex numbers for which
\begin{equation}\label{Kadstyleeq}
|\lambda_n-n|\leqq L<\frac{1}{\pi}\, \sqrt\frac{3\alpha}{8}, \ \ n=0, \pm 1, \pm 2, ...
\end{equation}
then $\{\operatorname{sinc}(\lambda_n-t)\}_{n\in\mathbb Z}$ satisfies the Paley-Wiener criterion and so forms a Riesz basis for $PW_{\pi}$.
\end{theorem}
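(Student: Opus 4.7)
The plan is to verify the Paley-Wiener stability criterion used in Proposition~\ref{t:alv}. Since $\{\operatorname{sinc}(n-t)\}_{n\in\ZZ}$ is an orthonormal basis of $PW_{\pi}$, it suffices to exhibit $\theta<1$ such that
\begin{equation*}
\Bigl\|\sum_{n} c_n\bigl(\operatorname{sinc}(\lambda_n - t) - \operatorname{sinc}(n-t)\bigr)\Bigr\|_{PW_{\pi}} \leq \theta\, \|(c_n)\|_{\ell^2}
\end{equation*}
for every finitely supported sequence. Setting $\delta_n := \lambda_n - n\in\CCC$ with $|\delta_n|\leq L$, and using the Fourier identification recalled in the introduction, which associates $\operatorname{sinc}(\mu - t)\in PW_{\pi}$ with $e^{i\mu\xi}\chi_{[-\pi,\pi]}(\xi)$ (still valid for $\mu\in\CCC$ by analytic continuation), Plancherel reduces the stability inequality to
\begin{equation*}
\Bigl\|\sum_n c_n\, e^{in\xi}\bigl(e^{i\delta_n\xi}-1\bigr)\Bigr\|_{L^2(-\pi,\pi)}^2 \leq \theta^2\cdot 2\pi\,\|(c_n)\|_{\ell^2}^2.
\end{equation*}

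Following the Duffin-Eachus \cite{2:2}/Vellucci \cite{Vel} strategy, I would expand each perturbation factor as $e^{i\delta_n\xi}-1=\sum_{k\geq 1}(i\delta_n\xi)^k/k!$ and interchange summations. Setting $g_k(\xi):=\sum_n c_n\,\delta_n^{\,k}\,e^{in\xi}$, the integrand becomes $\sum_{k\geq 1}\frac{(i\xi)^k}{k!}\,g_k(\xi)$, and Parseval on $\{e^{in\xi}/\sqrt{2\pi}\}$ gives the uniform estimate
\begin{equation*}
\|g_k\|_{L^2(-\pi,\pi)}^2 = 2\pi\sum_n|c_n|^2|\delta_n|^{2k}\leq 2\pi L^{2k}\,\|(c_n)\|_{\ell^2}^2.
\end{equation*}
Controlling $\bigl\|\sum_{k\geq 1}\frac{(i\xi)^k}{k!}g_k\bigr\|_{L^2}$ by a weighted Cauchy-Schwarz in the index $k$, balancing $(k!)^2$ against $|\xi|^{2k}$, produces a bound of the form $\sqrt{2\pi}\,\|(c_n)\|_{\ell^2}\,\Sigma(L)$ for an explicit series $\Sigma(L)$ in $\pi L$.

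The Riesz-basis condition $\Sigma(L)<1$ is then to be unwound into the threshold $L<\frac{1}{\pi}\sqrt{3\alpha/8}$. I expect that, after separating the lowest-order term (where the sharp $L^2$ computation $\int_{-\pi}^{\pi}\xi^2\,d\xi = 2\pi^3/3$ contributes the factor $\tfrac{3}{8}$ inside the square root) from the higher-order tail (estimated by $|\xi|^k\leq\pi^k$ and summed as a residual exponential), the resulting inequality takes the closed form $e^{x}<1+2x$ at $x=\tfrac{8\pi^2 L^2}{3}$, whose largest admissible positive value is exactly the Lamb-Oseen constant $\alpha$, the unique positive root of $e^\alpha=1+2\alpha$. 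Solving $\tfrac{8\pi^2 L^2}{3}<\alpha$ then recovers the claimed bound. The main obstacle is this calibration: a uniform application of $|\xi|^k\leq\pi^k$ across all $k$ produces only the Duffin-Eachus condition $e^{\pi L}<2$, so the delicate point is to isolate the contribution of the linear term and to arrange the Cauchy-Schwarz so that the series closes precisely to the Lamb-Oseen equation rather than to a weaker transcendental. The correction of the erroneous version from \cite{Ava} presumably lies exactly in this matching step.
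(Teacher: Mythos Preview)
Your reduction via Plancherel, the Taylor expansion, and the final target $e^{x}<1+2x$ at $x=\tfrac{8}{3}\pi^{2}L^{2}$ are all exactly right; you have correctly reverse-engineered where the Lamb--Oseen constant enters. The gap is in the mechanism you propose for reaching that target.

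The paper does \emph{not} isolate the $k=1$ term and crudely bound the rest. After the same Cauchy--Schwarz in $k$ that you describe, one must estimate, for every $k\geq 1$,
\[
\int_{-\pi}^{\pi}\xi^{2k}\,|g_k(\xi)|^{2}\,d\xi
=\sum_{n,m}a_n\overline{a_m}\int_{-\pi}^{\pi}\xi^{2k}e^{i(m-n)\xi}\,d\xi,
\qquad a_n:=c_n\delta_n^{\,k}.
\]
The decisive step is to split this into the diagonal $n=m$, which gives exactly $\tfrac{2\pi^{2k+1}}{2k+1}\sum_n|a_n|^{2}$, and the off-diagonal $n\neq m$, where one integrates by parts once (producing the kernel $1/(m-n)$) and then applies \emph{Hilbert's inequality} $\bigl|\sum_{n\neq m}b_n\overline{b_m}/(m-n)\bigr|\leq\pi\sum_n|b_n|^{2}$. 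This yields
\[
\mathcal{I}\leq \sum_{k\geq 1}\frac{(\pi L)^{2k}}{k!}\Bigl(\frac{1}{2k+1}+1\Bigr),
\]
and it is this series, via the termwise bound $\tfrac{2(k+1)}{2k+1}\leq\tfrac{(8/3)^{k}}{k+1}$ (with equality at $k=1$), that collapses to $\tfrac{1}{x}(e^{x}-x-1)$ with $x=\tfrac{8}{3}\pi^{2}L^{2}$. So the factor $\tfrac{3}{8}$ is not the ratio $\int\xi^{2}/\int 1$ from an isolated first term; it comes from matching the combined diagonal-plus-off-diagonal coefficient at $k=1$ to the series $\sum x^{k}/(k+1)!$.

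Your plan ``weighted Cauchy--Schwarz in $k$ plus Parseval on $g_k$'' only sees $\|g_k\|_{L^2}$, never $\int\xi^{2k}|g_k|^{2}$, and therefore cannot do better than $|\xi|^{k}\leq\pi^{k}$, which you yourself note gives only Duffin--Eachus. Without the diagonal/off-diagonal decomposition and Hilbert's inequality for the cross terms in $n$, the calibration you are looking for does not close.
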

Observe that the optimality of the bound for the complex case is not studied in our result.

\section{Preliminaries}
\label{sec:pre}
In this section we will introduce some useful notations and results about cardinal series, with reference to applications in sampling. Furthermore, we give a small overview on the Lamb-Oseen constant \cite{Oseen} which is involved in the estimation for the complex case.

\subsection{Sampling Theorem and Stability}
By $L^2({-\infty},{+\infty})$ we denote the Hilbert space of real functions that are square integrable in Lebesgue's sense:
$$L^2(\mathbb R)=\left\{f: \int_{-\infty}^{+\infty} |f(t)|^2 dt<+\infty\right\}$$
with respect to the inner product and $L^2$-norm that, on $\mathbb R$, are
$$\langle f, g \rangle=\frac{1}{2\pi}\int_{-\infty}^\infty f(t) \overline{g(t)} dt\qquad ||f||=\sqrt{\langle f, f \rangle}$$
Given $f\in L^2(\mathbb R)$ we denote by $\F({f})$ the Fourier transform of $f$,
$$\F\left(f\right)(\xi)=\int_{-\infty}^{+\infty} f(t) e^{-i\xi t}dt. $$
Let $e_n$ be an orthonormal basis of an Hilbert space $H$. Then Parseval's identity asserts that for every $x \in H$,
$$\sum_n |\langle x, e_n\rangle|^2 = \|x\|^2.$$
Plancherel identity is expressed, in its common form:
$$\int_{-\infty}^\infty f(t)\, \overline{g(t)}\, dt = \frac{1}{2\pi} \int_{-\infty}^\infty \F\left(f\right)(\xi)\, \overline{\F\left(g\right)(\xi)}\, d\xi.$$
A function $f \in L^2(\mathbb R)$ is band-limited if the Fourier transform $\F\left(f\right)$ has compact support. The Paley-Wiener space $PW_\pi$ is the subspace of $L^2(\mathbb R)$ defined by
$$PW_\pi:=\left\{ f\in L^2(\mathbb R) \Bigl| \ \operatorname{supp}\F\left(f\right)\subseteq [-\pi,\pi]\right\}.$$
We will now recall that the Paley-Wiener space has an orthonormal basis consisting of translates of sinc-function.
\begin{theorem}(Shannon's sampling theorem) \cite{Chr10}, \cite{Je77}, \cite{Ko33}, \cite{Sha}.
\label{th:3}
The functions $\{\operatorname{sinc}(\cdot-n) \}_{n\in \mathbb Z}$ form an orthonormal basis for $PW_\pi$. If $f \in PW_\pi$ is continuous, then
\begin{equation}
\label{eq:8}
f(t)=\sum_{n\in \mathbb Z}f(n)\, \operatorname{sinc}(t-n).
\end{equation}
\end{theorem}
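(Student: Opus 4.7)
The plan is to translate the entire statement to the Fourier side, where the claim reduces to the classical Fourier series expansion of $L^2[-\pi,\pi]$ functions in the exponential basis $\{e^{-in\xi}\}_{n\in\mathbb Z}$.

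First I would compute $\F(\operatorname{sinc}(\cdot - n))(\xi) = e^{-in\xi}\chi_{[-\pi,\pi]}(\xi)$ (up to the paper's normalization), which follows from the standard identity $\F^{-1}(\chi_{[-\pi,\pi]})(t) = \operatorname{sinc}(t)$ combined with the translation rule for $\F$. Orthonormality of $\{\operatorname{sinc}(\cdot - n)\}_{n\in\mathbb Z}$ in $PW_{\pi}$ is then immediate from Plancherel's identity together with the orthonormality of the exponentials on $[-\pi,\pi]$. Completeness transfers the same way: $\F$ realizes a Hilbert space isomorphism from $PW_\pi$ onto $L^2[-\pi,\pi]$, so because $\{e^{-in\xi}\}$ is a complete orthonormal basis of the target, its preimage $\{\operatorname{sinc}(\cdot - n)\}$ is a complete orthonormal basis of $PW_\pi$.

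For the reconstruction formula, take a continuous $f\in PW_\pi$. Expand $\F f\in L^2[-\pi,\pi]$ as a Fourier series $\F f(\xi)=\sum_n c_n e^{-in\xi}$ with $c_n=\frac{1}{2\pi}\int_{-\pi}^{\pi}\F f(\xi)\,e^{in\xi}\,d\xi$. By Fourier inversion applied at $t=n$, using that $\F f$ is supported in $[-\pi,\pi]$, this Fourier coefficient is exactly $f(n)$. Applying $\F^{-1}$ term by term to the expansion of $\F f$ produces $f=\sum_{n\in\mathbb Z} f(n)\operatorname{sinc}(\cdot - n)$ as an identity in the $L^2$ sense, which is essentially the Parseval/expansion identity already guaranteed by the first part of the theorem.

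The only delicate point, which I expect to be the real work, is upgrading this $L^2$ convergence to the pointwise equality stated. Since $PW_\pi$ functions are restrictions of entire functions of exponential type and are therefore continuous, and since $\sum_n |f(n)|^2=\|f\|^2<\infty$ by Parseval, a Cauchy-Schwarz estimate against the $\ell^2$-summability of $\{\operatorname{sinc}(t-n)\}_n$ at each fixed $t$ gives pointwise (indeed locally uniform) convergence of the series; this identifies the $L^2$ limit with the continuous function $f(t)$ as a numerical equality for every $t\in\mathbb R$.
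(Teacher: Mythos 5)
The paper does not prove this theorem --- it is quoted as a classical result with citations --- but your argument is the standard proof and is correct; it also matches the Fourier-transform correspondence $\F(\operatorname{sinc}(\mu-\cdot))\leftrightarrow e^{-i\mu\xi}\chi_{[-\pi,\pi]}$ that the paper itself uses in the Introduction and in the proof of Proposition \ref{t:alv}. The one point to keep an eye on is the paper's normalization (its inner product carries a factor $\frac{1}{2\pi}$, and $\F(e^{it\mu}\chi_{[-\pi,\pi]})=2\pi\operatorname{sinc}(\mu-\cdot)$), which you correctly flag as ``up to normalization''; and your upgrade from $L^2$ to pointwise convergence via $\sum_n|f(n)|^2<\infty$, $\sum_n\operatorname{sinc}^2(t-n)=1$ and Cauchy--Schwarz is exactly the right way to finish.
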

Taking the Fourier transform in equation (\ref{eq:8}) we obtain
\begin{equation}
\F\left(f\right)(\xi)=\sum_{n\in \mathbb Z}\left\langle\F\left(f\right),e^{i n \xi}\right\rangle_{L^2(-\pi,\pi)} e^{i n \xi},
\end{equation}
where $\langle g, h\rangle_{L^2(-\pi,\pi)}=\frac{1}{2\pi}\int_{-\pi}^\pi g(\xi) \overline{h(\xi)} d\xi$.


Usually, it is said that bases in Banach spaces form a \emph{stable} class in the sense that sequences sufficiently close to bases are themselves bases. The fundamental stability criterion, and historically the first, is due to Paley and Wiener \cite{Pal}, \cite{You}.
\begin{theorem}
\label{th:1}
Let $\{x_n\}$ be a basis for a Banach space $X$, and suppose that $\{y_n\}$ is a sequence of elements of $X$ such that
$$\left\|\sum_{i=1}^n a_i (x_i-y_i)\right\|\leq \lambda \left\|\sum_{i=1}^n a_i x_i\right\|$$
for some constant $0\leqq\lambda< 1$, and all choices of the scalars $a_1,\dots,a_n$ ($n=1,2,3,\dots$). Then $\{y_n\}$ is a basis for $X$ equivalent to $\{x_n\}$.
\end{theorem}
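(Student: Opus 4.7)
The plan is to verify the Paley--Wiener criterion (Theorem \ref{th:1}) for $\{\operatorname{sinc}(\lambda_n - \cdot)\}_{n\in\mathbb Z}$ regarded as a perturbation of the orthonormal sinc basis $\{\operatorname{sinc}(n - \cdot)\}_{n\in\mathbb Z}$ of $PW_\pi$ supplied by Theorem \ref{th:3}. Via Plancherel's identity and the Fourier pair $\operatorname{sinc}(\mu - t) \leftrightarrow e^{-i\mu\xi}\chi_{[-\pi,\pi]}(\xi)$, this reduces to producing a constant $\mu \in [0,1)$ such that
\begin{equation*}
\left\| \sum_n a_n(e^{-in\xi} - e^{-i\lambda_n\xi}) \right\|_{L^2(-\pi,\pi)} \leq \mu \left\| \sum_n a_n e^{-in\xi} \right\|_{L^2(-\pi,\pi)}
\end{equation*}
for every finite sequence of scalars $(a_n)$. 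Because $\lambda_n$ is allowed to be complex, Kadec's trigonometric expansion (which powered Proposition \ref{t:alv}) is unavailable, so one is naturally led to a power-series approach in the spirit of Duffin--Eachus.

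Setting $\delta_n := \lambda_n - n$ with $|\delta_n| \leq L$, the central ingredient is the absolutely convergent expansion
\begin{equation*}
e^{-in\xi} - e^{-i\lambda_n\xi} \;=\; e^{-in\xi}\bigl(1 - e^{-i\delta_n\xi}\bigr) \;=\; \sum_{k=1}^\infty \frac{(-1)^{k+1}(i\xi)^k}{k!}\, \delta_n^k\, e^{-in\xi}.
\end{equation*}
Summing in $n$, applying the triangle inequality in $L^2(-\pi,\pi)$, and invoking Parseval on the orthonormal system $\{e^{-in\xi}\}$ — which gives $\|\sum_n a_n\delta_n^k e^{-in\xi}\|^2 \leq L^{2k}\|\sum_n a_n e^{-in\xi}\|^2$ — reduces the whole estimate to sharp control of the multiplier $\xi^k$ on $(-\pi,\pi)$. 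The brutal bound $|\xi|^k \leq \pi^k$ yields the Duffin--Eachus series $\sum_{k\geq 1}(\pi L)^k/k! = e^{\pi L}-1$, hence only the weaker threshold $L < (\ln 2)/\pi$. To reach the advertised $\frac{1}{\pi}\sqrt{3\alpha/8}$ one must instead exploit the $L^2$ identity $\frac{1}{2\pi}\int_{-\pi}^\pi \xi^{2k}\,d\xi = \pi^{2k}/(2k+1)$, whose $k=1$ instance $\pi^2/3$ is what should account for the factor $3$ under the radical.

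Accordingly, the strategy is to split the Taylor series at $k=1$: handle the first-order term by a Cauchy--Schwarz (or Bernstein-type) estimate that captures the genuine $\pi/\sqrt{3}$ gain, and majorise the tail $\sum_{k\geq 2}$ by the uniform bound. Imposing that the sum of the two pieces remain $< 1$ converts into a transcendental inequality in $L$; squaring to eliminate the radical and balancing the split constants plausibly explains the factor $8$, and the extremal parameter of the optimization is precisely the Lamb--Oseen constant $\alpha$ (which also appeared, albeit with an incorrect bound, in the preliminary analysis \cite{Ava}). The main obstacle is that multiplication by $\xi$ is \emph{not} diagonal in the Fourier basis, so the integrated identity $\pi^2/3$ does not transfer to an operator bound on $\sum_n a_n\delta_n e^{-in\xi}$ without auxiliary work; carrying out this transfer while keeping the $k\geq 2$ tail summable is where the heart of the proof lies. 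Once $\mu<1$ is produced uniformly in $(a_n)$, Theorem \ref{th:1} delivers the Riesz-basis conclusion.
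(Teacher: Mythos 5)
Your proposal does not address the statement you were asked to prove. The statement is Theorem \ref{th:1}, the abstract Paley--Wiener stability criterion: for a basis $\{x_n\}$ of a Banach space $X$ and a sequence $\{y_n\}$ satisfying $\|\sum_{i=1}^n a_i(x_i-y_i)\|\leq \lambda\|\sum_{i=1}^n a_i x_i\|$ with $0\leqq\lambda<1$, conclude that $\{y_n\}$ is a basis equivalent to $\{x_n\}$. What you have written instead is a strategy for Theorem \ref{t:1} of the paper (the complex Kadec-type bound $L<\frac{1}{\pi}\sqrt{3\alpha/8}$ for perturbed sinc systems), and your very first sentence reveals the confusion: you propose to \emph{verify} the hypothesis of Theorem \ref{th:1} for a particular system, which is an application of the theorem, not a proof of it. Nothing in your text establishes the general Banach-space assertion; the Taylor expansion of $e^{-i\delta_n\xi}$, the multiplier $\xi^k$, the Lamb--Oseen constant, and the Duffin--Eachus splitting are all irrelevant to the statement at hand, which involves no sinc functions, no Fourier transform, and no Hilbert space structure at all.

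For the record, the paper does not reprove Theorem \ref{th:1}; it is quoted as the classical stability criterion of Paley and Wiener with references to \cite{Pal} and \cite{You}. The standard argument is short: the hypothesis implies that the map $T\bigl(\sum_i a_i x_i\bigr):=\sum_i a_i(x_i-y_i)$ is well defined on the dense span of $\{x_n\}$ and bounded with $\|T\|\leq\lambda<1$, hence extends to all of $X$; then $I-T$ is invertible by the Neumann series $\sum_{k\geq 0}T^k$, and since $(I-T)x_n=y_n$, the sequence $\{y_n\}$ is the image of the basis $\{x_n\}$ under a bounded bijection with bounded inverse, which is exactly the definition of an equivalent basis. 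If you want to salvage your work, your sketch belongs to the proof of Theorem \ref{t:1}, where it would still need the operator-norm transfer for the multiplier $\xi$ that you yourself flag as the unresolved ``heart of the proof.''
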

We reformulate Theorem \ref{th:1} to be applied to orthonormal bases.
\begin{theorem}
\label{th:2}
Let $\{e_n\}$ be an orthonormal basis for a Hilbert space $H$, and let $\{f_n\}$ be ``close'' to $\{e_n\}$ in the sense that
$$\left\|\sum_{i=1}^n a_i (e_i-f_i)\right\|\leq \lambda \sqrt{\sum |c_i|^2}$$
for some constant $0\leqq\lambda< 1$, and all choices of the scalars $a_1,\dots,a_n$ ($n=1,2,3,\dots$). Then $\{f_n\}$ is a Riesz basis for $H$.
\end{theorem}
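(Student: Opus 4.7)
The plan is to apply Theorem \ref{th:2} to the orthonormal basis $e_n=\operatorname{sinc}(t-n)$ of $PW_\pi$ (Theorem \ref{th:3}) and the perturbed sequence $f_n=\operatorname{sinc}(\lambda_n-t)$. Since $\operatorname{sinc}(\lambda_n-t)=\operatorname{sinc}(t-\lambda_n)$ (the evenness of $\operatorname{sinc}$ extends to complex $\lambda_n$ by analytic continuation) and $\F[\operatorname{sinc}(t-\lambda_n)](\xi)=e^{-i\lambda_n\xi}\chi_{[-\pi,\pi]}(\xi)$, one has $f_n\in PW_\pi$ also for $\lambda_n\in\mathbb C$. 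Setting $\delta_n=\lambda_n-n$, so that $|\delta_n|\le L$, Plancherel moves the estimate to the frequency side:
\[
\Bigl\|\sum_n a_n(e_n-f_n)\Bigr\|_{L^2(\mathbb R)}^2=\frac{1}{2\pi}\Bigl\|\sum_n a_n\bigl(e^{-in\xi}-e^{-i\lambda_n\xi}\bigr)\Bigr\|_{L^2(-\pi,\pi)}^2,
\]
so the problem reduces to bounding a non-harmonic exponential sum on $[-\pi,\pi]$.

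Next I would factor $e^{-in\xi}-e^{-i\lambda_n\xi}=-e^{-in\xi}(e^{-i\delta_n\xi}-1)$ and insert the Taylor expansion $e^{-i\delta_n\xi}-1=\sum_{k\ge 1}(-i\delta_n\xi)^k/k!$. Interchanging the summations (legitimate since only finitely many $a_n$ are nonzero) and applying Minkowski's inequality in $L^2(-\pi,\pi)$ yields
\[
\Bigl\|\sum_n a_n\bigl(e^{-in\xi}-e^{-i\lambda_n\xi}\bigr)\Bigr\|_{L^2(-\pi,\pi)}\le\sum_{k\ge 1}\frac{1}{k!}\bigl\|\xi^k F_k(\xi)\bigr\|_{L^2(-\pi,\pi)},\qquad F_k(\xi):=\sum_n a_n\delta_n^k e^{-in\xi}.
\]
Orthogonality of $\{e^{-in\xi}\}$ on $[-\pi,\pi]$ together with $|\delta_n|\le L$ gives $\|F_k\|_{L^2(-\pi,\pi)}^2\le 2\pi L^{2k}\sum_n|a_n|^2$, and it only remains to handle the $\xi^k$ factor and sum in $k$.

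The third and crucial step is this summation. A crude $\|\xi^k\|_\infty=\pi^k$ would reproduce only a Duffin-Eachus-type condition $e^{\pi L}-1<1$; to reach the sharper $\pi L<\sqrt{3\alpha/8}$ I would instead combine the exact moments $\int_{-\pi}^\pi\xi^{2k}\,d\xi=2\pi^{2k+1}/(2k+1)$ with a weighted Cauchy-Schwarz split of the $k$-sum, choosing the weights $c_k$ so as to minimise the resulting scalar functional in $\pi L$. The minimum value is characterised by a transcendental equation whose positive root is the Lamb-Oseen constant $\alpha$ recalled in \cite{Oseen}, \cite{Ava}, and the condition $\lambda<1$ of Theorem \ref{th:2} then collapses exactly to $L<\frac{1}{\pi}\sqrt{3\alpha/8}$.

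The hard part is this last optimisation. Because the $\delta_n$ are allowed to be complex, the clean trigonometric identity $|1-e^{-i\delta\xi}|^2=4\sin^2(\delta\xi/2)$ that underlies Kadec's proof is unavailable, so all of the gain must come from the power-series expansion together with a tight choice of the weights $c_k$; it is precisely a slightly suboptimal choice here that led to the incorrect constant in \cite{Ava}. Once the Lamb-Oseen transcendental equation is correctly identified, the remaining steps (Plancherel, Minkowski, the orthogonality estimate on $F_k$, and verifying the hypothesis of Theorem \ref{th:2}) are routine.
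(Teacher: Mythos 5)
Your proposal does not prove the stated theorem; it sketches a proof of a different one. The statement at hand is Theorem \ref{th:2}, the abstract Hilbert-space stability criterion: given an orthonormal basis $\{e_n\}$ of an arbitrary Hilbert space $H$ and a sequence $\{f_n\}$ satisfying $\left\|\sum_{i=1}^n a_i(e_i-f_i)\right\|\leq\lambda\left(\sum|a_i|^2\right)^{1/2}$ with $0\leq\lambda<1$, conclude that $\{f_n\}$ is a Riesz basis. Your text instead opens with ``apply Theorem \ref{th:2} to $e_n=\operatorname{sinc}(t-n)$ and $f_n=\operatorname{sinc}(\lambda_n-t)$'' and then carries out Plancherel, a Taylor expansion in $\delta_n=\lambda_n-n$, and an optimisation leading to the Lamb--Oseen constant. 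That is an argument for Theorem \ref{t:1} (the complex perturbation bound $L<\frac{1}{\pi}\sqrt{3\alpha/8}$), in which Theorem \ref{th:2} is invoked as a black box. Nothing in your proposal establishes Theorem \ref{th:2} itself, so as a proof of the given statement it is vacuous.

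What is actually needed is short, and it is how the paper treats the matter: Theorem \ref{th:2} is an immediate reformulation of the Paley--Wiener criterion (Theorem \ref{th:1}). Since $\{e_n\}$ is orthonormal, $\left\|\sum_{i=1}^n a_ie_i\right\|=\left(\sum_{i=1}^n|a_i|^2\right)^{1/2}$ by Pythagoras, so the hypothesis of Theorem \ref{th:2} is exactly the hypothesis of Theorem \ref{th:1} with $x_i=e_i$, $y_i=f_i$; Theorem \ref{th:1} then yields that $\{f_n\}$ is a basis equivalent to the orthonormal basis $\{e_n\}$, and a basis equivalent to an orthonormal basis is, by definition, a Riesz basis. (Equivalently, one argues directly: the map $e_n\mapsto e_n-f_n$ extends to a bounded operator $T$ with $\|T\|\leq\lambda<1$, so $I-T$ is invertible by the Neumann series, and $f_n=(I-T)e_n$ exhibits $\{f_n\}$ as the image of an orthonormal basis under a bounded invertible operator.) If you intended your text as a proof of Theorem \ref{t:1}, note that it also diverges from the paper's route there: the paper expands $\operatorname{sinc}(\lambda_n-t)$ in a Taylor series in the node, controls the diagonal terms by the moments $\int_{-\pi}^{\pi}\xi^{2k}\,d\xi$ and the off-diagonal terms by Hilbert's inequality, and reaches the equation $e^{x}=2x+1$ with $x=\tfrac{8}{3}\pi^2L^2$ via the elementary bound $\frac{(\pi L)^{2k}}{k!}\bigl[1+\frac{1}{2k+1}\bigr]\leq\frac{(\pi L)^{2k}}{(k+1)!}\bigl(\frac{8}{3}\bigr)^{k}$, not via a weighted Cauchy--Schwarz optimisation.
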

\begin{theorem}[Kadec $\frac{1}{4}$-Theorem]
Let $\{\lambda_n\}_{n\in \mathbb Z}$ be a sequence in $\mathbb R$ satisfying
$$|\lambda_n -n|<\frac{1}{4}, \ n=0,\pm 1,\pm2, ...$$
then the set $\{e^{i\lambda_n t} \}_{n\in \mathbb Z}$ is a Riesz basis for $L^2[-\pi,\pi]$.
\end{theorem}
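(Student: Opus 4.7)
The plan is to invoke the Paley--Wiener stability criterion (Theorem~\ref{th:2}) with the orthonormal basis $e_n(t)=e^{int}$ of $L^2[-\pi,\pi]$ and the candidate system $f_n(t)=e^{i\lambda_n t}$. Writing $\delta_n:=\lambda_n-n$, so that $|\delta_n|\leqq L<1/4$, the task reduces to exhibiting a constant $\mu<1$ such that
\begin{equation*}
\Bigl\|\sum_n a_n\bigl(e^{int}-e^{i\lambda_n t}\bigr)\Bigr\|_{L^2[-\pi,\pi]}\leqq\mu\,\Bigl(\sum_n|a_n|^2\Bigr)^{1/2}
\end{equation*}
for every finitely supported scalar sequence.

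First I would factor $e^{int}-e^{i\lambda_n t}=e^{int}\bigl(1-e^{i\delta_n t}\bigr)$ and split
$$1-e^{i\delta t}=\bigl(1-\cos(\delta t)\bigr)-i\sin(\delta t).$$
On $[-\pi,\pi]$, Kadec's classical Fourier expansions give
\begin{align*}
1-\cos(\delta t)&=\Bigl(1-\tfrac{\sin(\pi\delta)}{\pi\delta}\Bigr)+\sum_{k=1}^\infty\frac{(-1)^k\,2\delta\sin(\pi\delta)}{\pi(\delta^2-k^2)}\cos(kt),\\
\sin(\delta t)&=\sum_{k=1}^\infty\frac{(-1)^{k-1}\,2k\sin(\pi\delta)}{\pi(k^2-\delta^2)}\sin(kt),
\end{align*}
decomposing the perturbation along the orthogonal trigonometric system $\{1,\cos(kt),\sin(kt)\}_{k\geqq 1}$. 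Substituting back and collecting, the left-hand side splits into three pieces whose $L^2$-norms can be estimated separately via the triangle inequality.

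For the constant piece, orthogonality of $\{e^{int}\}$ yields directly $\sum_n\bigl(1-\tfrac{\sin(\pi\delta_n)}{\pi\delta_n}\bigr)^2|a_n|^2$, bounded by $\bigl(1-\tfrac{\sin(\pi L)}{\pi L}\bigr)^2\sum|a_n|^2$ using the monotonicity on $[0,1/4]$. The cosine and sine pieces are more delicate, since multiplying $e^{int}$ by $\cos(kt)$ or $\sin(kt)$ couples $n$ and $n\pm k$; the standard device is to re-index the double sum, collect coefficients of each $e^{imt}$, and apply Parseval on $[-\pi,\pi]$ together with Cauchy--Schwarz on the inner $k$-sum, using the scalar identities
$$\sum_{k=1}^\infty\Bigl|\tfrac{2\delta\sin(\pi\delta)}{\pi(\delta^2-k^2)}\Bigr|^2\;\text{and}\;\sum_{k=1}^\infty\Bigl|\tfrac{2k\sin(\pi\delta)}{\pi(k^2-\delta^2)}\Bigr|^2,$$
which are controlled respectively by $\bigl(1-\cos(\pi\delta)\bigr)^2$ and $\sin^2(\pi\delta)$ after the appropriate trigonometric reduction. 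Taking suprema for $|\delta_n|\leqq L$ and combining, one recovers Kadec's explicit bound $\mu=1-\cos(\pi L)+\sin(\pi L)$, which is $<1$ precisely when $\pi L<\pi/4$, i.e.\ when $L<1/4$.

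I expect the main obstacle to be the double-sum rearrangement in the cosine and sine pieces: the products $e^{int}\cos(kt)$ and $e^{int}\sin(kt)$ produce terms $e^{i(n\pm k)t}$, so controlling $\bigl\|\sum_{n,k}a_n\alpha_k(\delta_n)e^{i(n\pm k)t}\bigr\|_{L^2}$ with the sharp constant, rather than a crude triangle inequality that would lose the monotonicity gain and fail at $L=1/4$, requires a Schur-test-style estimate exploiting that $\alpha_k(\delta_n)$ depends on $k$ through a kernel summable in $k^2$ uniformly in $|\delta_n|\leqq L$.
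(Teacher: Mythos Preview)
The paper does not give its own proof of this theorem: it is stated in the preliminaries as a classical result, with references to \cite{Kad} and \cite{You}, and the constant $1-\cos(\pi L)+\sin(\pi L)$ is later invoked only by citation inside the proof of Proposition~\ref{t:alv}. So the comparison below is against the standard argument in \cite{You}, which is what the paper is relying on.

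Your setup is exactly the classical one: the Paley--Wiener criterion, the factorisation $e^{int}(1-e^{i\delta_n t})$, and the Fourier expansion of $1-e^{i\delta t}$ in the system $\{1,\cos kt,\sin kt\}$. The gap is in how you plan to handle the cosine and sine blocks. You propose to expand $e^{int}\cos(kt)$ as a combination of $e^{i(n\pm k)t}$, reindex, and control the resulting double sum by Parseval together with Cauchy--Schwarz on the inner $k$-sum, using the $\ell^2$ quantities $\sum_k|A_k(\delta)|^2$ and $\sum_k|B_k(\delta)|^2$; you then anticipate needing a Schur-type estimate to rescue the sharp constant. This route does not work as described: the coefficients $A_k(\delta_n)$ depend on $n$, so the sum is not a convolution, a bare Cauchy--Schwarz on the $k$-index diverges, and in any case the Kadec bound arises from the $\ell^1$ sum of the Fourier coefficients, not their $\ell^2$ norm. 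The ``main obstacle'' you flag is an artefact of the wrong order of operations. The actual Kadec device is to apply the triangle inequality over $k$ \emph{before} touching the $n$-sum,
\[
\Bigl\|\sum_n c_n e^{int}\bigl(1-e^{i\delta_n t}\bigr)\Bigr\|
\;\leq\;\sum_{k\geq 0}\Bigl\|\phi_k(t)\sum_n c_n\,\alpha_k(\delta_n)\,e^{int}\Bigr\|,
\qquad \phi_k\in\{1,\cos kt,\sin kt\},
\]
and then to observe that $\|\phi_k\|_\infty\leq 1$, so multiplication by $\phi_k$ is a contraction on $L^2[-\pi,\pi]$; orthonormality of $\{e^{int}\}$ then bounds each summand by $\sup_{|\delta|\leq L}|\alpha_k(\delta)|\cdot\bigl(\sum_n|c_n|^2\bigr)^{1/2}$. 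Summing these suprema in $k$ via the explicit partial-fraction identities gives exactly $1-\cos(\pi L)+\sin(\pi L)$, with no reindexing, no coupling of $n$ with $n\pm k$, and no Schur test.
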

Kadec's theorem has been extensively generalized. See, for example \cite{Avd}, \cite{Bai}, \cite{Khr}, \cite{Pav2}, \cite{Sun}, \cite{Vel}.

\subsection{Lambert function W, Lamb-Oseen constant.}
\label{Lamfunc}

The Lambert function $W$ \cite{Co96}, \cite{Hayes}, \cite{Ste05} is defined by the equation
\begin{equation}
\label{wfunc}
W(x) e^{W(x)}=x
\end{equation}
The function $f(\xi)=\xi e^\xi$ for $\xi\in \mathbb R$ has a strict minimum point in $\xi=-1$.
We draw the picture of $\xi e^\xi=f(\xi)$ in figure (\ref{fig:1}).
\begin{figure}[tb]
\centering
\includegraphics[scale=0.80]{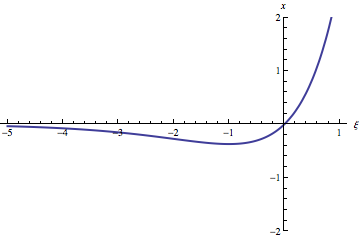}
\caption{Diagram of $\xi e^\xi=f(\xi)$.}
\label{fig:1}
\end{figure}
In \cite{Ava} it is proved the following proposition.
\begin{proposition}
The function $f(\xi)=\xi e^\xi$ has an increasing inverse in $(-1,+\infty)$, and a decreasing inverse in $(-\infty,-1)$.
\end{proposition}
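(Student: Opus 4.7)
The plan is to reduce the statement to a one-variable calculus check on the derivative of $f$. First I would compute
\[
f'(\xi)=\frac{d}{d\xi}\bigl(\xi e^\xi\bigr)=e^\xi+\xi e^\xi=(1+\xi)e^\xi,
\]
so that the sign of $f'$ is entirely controlled by the factor $1+\xi$, since $e^\xi>0$ for every $\xi\in\mathbb R$.

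Next I would split the real line at the critical point $\xi=-1$ (already identified in the figure as the strict minimum). On $(-1,+\infty)$ one has $1+\xi>0$ and therefore $f'(\xi)>0$, so $f$ is strictly increasing there; by the standard inverse function statement for monotone continuous functions, $f$ admits a continuous strictly increasing inverse on its image. Symmetrically, on $(-\infty,-1)$ one has $1+\xi<0$ and thus $f'(\xi)<0$, so $f$ is strictly decreasing there and again admits a continuous strictly decreasing inverse on its image.

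To make the statement complete I would, as a small bookkeeping step, identify the two images. For the increasing branch, $f(-1)=-e^{-1}$ and $f(\xi)\to+\infty$ as $\xi\to+\infty$, so the increasing inverse is defined on $[-e^{-1},+\infty)$. For the decreasing branch, $f(-1)=-e^{-1}$ and $f(\xi)=\xi e^\xi\to 0^-$ as $\xi\to-\infty$ (since $|\xi|e^{\xi}\to 0$ while $\xi<0$), so the decreasing inverse is defined on $(-e^{-1},0)$.

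There is no real obstacle here: the only subtlety is the behaviour at $\xi\to-\infty$, where one must check that $\xi e^\xi\to 0$ approaches $0$ from below rather than diverging; this follows from the fact that $e^\xi$ decays faster than any polynomial grows. Everything else is the elementary observation that a continuous, strictly monotone function is a homeomorphism onto its image with monotone inverse of the same type.
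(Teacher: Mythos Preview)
Your argument is correct and is exactly the natural one: compute $f'(\xi)=(1+\xi)e^\xi$, read off strict monotonicity on each side of $\xi=-1$, and invoke the inverse function theorem for strictly monotone continuous maps. The paper itself does not supply a proof of this proposition; it simply cites the earlier paper \cite{Ava}, and the argument there is almost certainly the same derivative computation you give. One small bookkeeping slip: since the proposition works on the \emph{open} interval $(-1,+\infty)$, the image of the increasing branch is $(-e^{-1},+\infty)$ rather than $[-e^{-1},+\infty)$; the paper later includes the endpoint by restricting instead to $(-\infty,-1]$ and $[-1,+\infty)$ when defining $W_{-1}$ and $W_0$.
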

We consider $f(\xi)=\xi e^\xi$ restricted to the interval $(-\infty,-1]$ and we denote by  $W_{-1}$ its  inverse. $W_{-1}$ is defined in the interval $[-1/e,0)$.
We have two identities arising from the definition of  $W_{-1}$:
\begin{equation}
\label{id1}
W_{-1}(\xi e^\xi)=\xi, \left[\Leftrightarrow W_{-1}\left[f(\xi)\right]=W_{-1}\left[\xi e^\xi\right]=\xi\right] \forall \xi\in (-\infty,-1]
\end{equation}
and
\begin{equation}
\label{id2}
W_{-1}(\bar{x})e^{W_{-1}(\bar{x})}=\bar{x} \ \ \left[\Rightarrow f\left(W_{-1}(\bar{x})\right)=\bar{x}\right] \forall \bar{x}\in[-\frac{1}{e},0)
\end{equation}
Also  we denote by $W_{0}$ the restriction to the interval $[-1/e,0)$
of the increasing inverse of $f(\xi)=\xi e^\xi$.
The two identities hold true:
\begin{equation}
\label{id3}
W_{0}(\xi e^\xi)=\xi, \ \ \left[\Leftrightarrow W_0\left[f(\xi)\right]=W_0\left[\xi e^\xi\right]=\xi\right],\ \  \forall \xi\in[-1,0)
\end{equation}
and
\begin{equation}
\label{id4}
W_{0}(\bar{x})e^{W_{0}(\bar{x})}=\bar{x} \ \ \left[\Rightarrow f\left(W_0(\bar{x})\right)=\bar{x}\right]\ \ \forall \bar{x}\in[-\frac{1}{e},0)
\end{equation}
\subsubsection{Numerical values.}
Let us assume that $\bar{x}$ is a solution of our equation.
\begin{equation}
\label{3}
e^{\bar{x}}-2\bar{x}=1
\end{equation}
In order to use the Lambert function $W$, we observe that from (\ref{3}) we get the equivalences
$$e^{\bar{x}}-2\bar{x}=1\Leftrightarrow e^{e^{\bar{x}}-2\bar{x}}=e$$
whence
$-\tfrac12e^{\bar{x}}e^{-\frac12\large e^{\bar{x}}}=-\tfrac12e^{-\frac12}$.
Therefore we can identifies $-\tfrac12e^{\bar{x}}$ with $W\left(-\tfrac12e^{-\frac12}\right)$. Since
$-\frac{1}{e}<-\tfrac12e^{-\frac12}<0,$
the equation which defines the function $W$ of Lambert, has two branches which verifies the same equation
$W(x) e^{W(x)}=-\tfrac12e^{-\frac12}$
and we will have
\begin{equation}
\label{5}
-\tfrac12e^{\bar{x}}=W_0\left(-\tfrac12e^{-\frac12}\right)
\end{equation}
and
\begin{equation}
\label{6}
-\tfrac12e^{\bar{x}}=W_{-1}\left(-\tfrac12e^{-\frac12}\right).
\end{equation}
We call $\bar{x}_1$ the $\bar{x}$ solution of (\ref{5}), and $\bar{x}_2$ the solution of (\ref{6}).

We state easy that $\bar{x}_1=0$. In fact from (\ref{5}) we have
$$-\tfrac12e^{\bar{x}_1}=-\tfrac12 \left[=W_0\left(-\tfrac12e^{-\frac12}\right)\right]$$
and from $e^{\bar{x}_1}=1$, easily follows $\bar{x}_1=0$.

From (\ref{6}), and the relation (\ref{id2}) we get
$e^{\bar{x}_2}=-2W_{-1}\left(-\tfrac12e^{-\frac12}\right),$
and so $\bar{x}_2$:
$$\ln\left(-2W_{-1}\left(-\tfrac12e^{-\frac12}\right)\right)=-\ln\frac{1}{-2W_{-1}\left(-\tfrac12e^{-\frac12}\right)}$$
Now we multiply numerator and denominator by $e^{\frac12}$
$$=-\ln\left[\frac{-\tfrac12 e^{-\frac12}}{W_{-1}\left(-\tfrac12e^{-\frac12}\right)} e^{\frac12}\right] = -\frac{1}{2}-\ln\frac{-\tfrac12e^{-\frac12}}{W_{-1}\left(-\tfrac12e^{-\frac12}\right)}$$
By (\ref{id2}) we have
$$\bar{x}_2=-\frac{1}{2}-W_{-1}\left(-\tfrac12e^{-\frac12}\right).$$
The value $-\frac{1}{2}-W_{-1}\left(-\tfrac12e^{-\frac12}\right)$ is called the parameter of Oseen, or Lamb-Oseen constant, denoted by $\alpha$. Numerical estimates give
$$\alpha=1.25643...$$
We have introduced the Lambert function $W$ in order to give an useful expression to the root of equation
\begin{equation}
\label{walpha}
e^{\alpha}=2\alpha+1.
\end{equation}
In \cite{Ava} we have proved that the real number $\alpha$ is transcendental, through application of the Lindemann - Weierstrass theorem \cite{Gelf}.

\section{Proof of the results}
\label{sec:proofs}
In the following we give the results of the paper.
\subsection{Proof of Proposition \ref{t:alv}}
\begin{proof}[Proof of Proposition \ref{t:alv}]
Write
$$\lambda:=\left\|\sum_n  c_{n}\left(\operatorname{sinc}(n-\xi)- \operatorname{sinc}(\lambda_n-\xi)\right)\right\|^2_{L^2(\R)}.$$
The Fourier transform of the function $ t\to e^{i t\mu}\chi_{[-\pi,\pi]}(t)$ is $\xi\to 2\pi\operatorname{sinc}(\mu-\xi)$. In fact:
$$\F\left(e^{i t\mu}\chi_{[-\pi,\pi]}(t)\right)(\xi)=\int_{-\pi}^\pi e^{i(\mu-\xi)t} dt=2\pi\operatorname{sinc}(\mu-\xi).$$
By Plancherel's theorem
$$\left\|\sum_n  c_{n}\left(\operatorname{sinc}(n-\xi)- \operatorname{sinc}(\lambda_n-\xi)\right)\right\|^2_{L^2(\R)}=
\left\|\sum_n  c_{n} \chi_{[-\pi,\pi]}(t)\left(e^{i n t}-e^{i \lambda_n t}\right)\right\|_{L^2(\R)}^2 $$
$$=\left\|\sum_n  c_{n}  \left(e^{i n t}-e^{i \lambda_n t}\right)\right\|_{L^2(-\pi, \pi)}^2$$
and so, following the proof of Kadec's theorem (see e.g. \cite{You}), when $L<\frac 14$ then $\lambda\leq1-\cos(\pi L)-\sin(\pi L) <1$.
Since $\left\{\operatorname{sinc}(n - \xi)\right\}$ is a Riesz basis of $PW_{\pi}$, the Paley-Wiener criterion shows that also $ \{\operatorname{sinc}(\lambda_n-\xi)\}$ is a Riesz basis of $PW_{\pi}$.

Constant $1/4$ is optimal also for $\left\{\operatorname{sinc}( \lambda_n - \xi)\right\}$. A counterexample due to Ingham \cite{Ing} prove that the set $\{e^{i \lambda_n t}\}$ is not a Riesz basis of $L^2(-\pi,\pi)$ when
\begin{equation}
\lambda_n=
\begin{cases}
 n+\frac{1}{4}, \ \ \ \ \ \ n>0\\
 0, \ \ \ \ \ \ \ \ \ \ \ \   n=0 \\
 n-\frac{1}{4}, \ \ \ \ \ \ n<0 \\
\end{cases}
\end{equation}
Since $PW_\pi$ is isometrically equivalent to $L^2(-\pi,\pi)$ via Fourier transform, the set $\left\{\operatorname{sinc}( \lambda_n - \xi)\right\}$ is not a Riesz basis.
\end{proof}
\begin{corollary}
\label{c:alv}
Let $\{x_n'\}_{n\in\mathbb Z}$ be a system biorthogonal to $\{\operatorname{sinc}(\cdot-\lambda_n) \}_{n\in \mathbb Z}$. Let $\{\lambda_n\}_{n\in\ZZ}$ be a sequence of real numbers for which
\begin{equation}
|\lambda_n-n|\leqq L<\infty, \ \ n=0, \pm 1, \pm 2, ...
\end{equation}
If $L<\frac 14$, and if $f \in PW_\pi$ is continuous, then
\begin{equation}
\label{eq:8c}
f(t)=\sum_{n\in \mathbb Z}\langle f, x_n'\rangle_{PW_\pi}\, \operatorname{sinc}(t-\lambda_n).
\end{equation}
\end{corollary}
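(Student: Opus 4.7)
The plan is to combine the Riesz basis statement of Proposition \ref{t:alv} with the reproducing kernel structure of $PW_\pi$; essentially the corollary is the standard dual-basis expansion formula specialized to this setting.

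First I would invoke Proposition \ref{t:alv} to conclude that under $L<\frac{1}{4}$, the system $\{\operatorname{sinc}(\cdot-\lambda_n)\}_{n\in\ZZ}$ is a Riesz basis for $PW_\pi$. Since every Riesz basis in a Hilbert space admits a unique biorthogonal dual, which is itself a Riesz basis, the sequence $\{x_n'\}$ in the hypothesis is exactly this dual system, and every $f\in PW_\pi$ has a unique norm-convergent expansion
$$f=\sum_{n\in\ZZ} c_n(f)\,\operatorname{sinc}(\cdot-\lambda_n)$$
with the series converging in the $L^2$-norm.

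Next I would identify the coefficients. Pairing both sides of the expansion with $x_m'$ in the $PW_\pi$ inner product and using biorthogonality $\langle \operatorname{sinc}(\cdot-\lambda_n),x_m'\rangle_{PW_\pi}=\delta_{nm}$, together with continuity of the inner product (which is where the Riesz basis, hence unconditional-basis, convergence is used to interchange sum and pairing), one obtains $c_m(f)=\langle f,x_m'\rangle_{PW_\pi}$. So the $L^2$-convergent identity
$$f=\sum_{n\in\ZZ}\langle f,x_n'\rangle_{PW_\pi}\,\operatorname{sinc}(\cdot-\lambda_n)$$
holds in $PW_\pi$.

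Finally, I would upgrade $L^2$-convergence to pointwise convergence using that $PW_\pi$ is a reproducing kernel Hilbert space: by Shannon's sampling theorem (Theorem \ref{th:3}), $\operatorname{sinc}(t-\cdot)$ belongs to $PW_\pi$ for every $t\in\RR$, and for continuous $f\in PW_\pi$ one has the reproducing identity $f(t)=\langle f,\operatorname{sinc}(t-\cdot)\rangle_{PW_\pi}$. Thus the point-evaluation functional $f\mapsto f(t)$ is bounded on $PW_\pi$, and applying it to the $L^2$-convergent series yields the pointwise equality \eqref{eq:8c}. There is no real obstacle here: the only small subtlety worth flagging is that one must justify moving from norm convergence to pointwise evaluation, which is precisely what the reproducing kernel property of $PW_\pi$ delivers for free.
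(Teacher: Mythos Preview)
Your proof is correct and follows essentially the same approach as the paper: invoke Proposition~\ref{t:alv} to get that $\{\operatorname{sinc}(\cdot-\lambda_n)\}$ is a Riesz basis, then appeal to the standard fact that any element of the space expands uniquely with coefficients given by the biorthogonal system. Your version is in fact more careful than the paper's, since you explicitly justify the passage from $L^2$-convergence to the pointwise identity \eqref{eq:8c} via the reproducing kernel property of $PW_\pi$, a step the paper leaves implicit.
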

\begin{proof}
Let $\{\lambda_n\}_{n\in\ZZ}$ be a sequence of real numbers for which
\begin{equation}
|\lambda_n-n|\leqq L<\frac{1}{4}, \ \ n=0, \pm 1, \pm 2, ...
\end{equation}
We denote with the sequence $\{x_n\}_{n\in\mathbb Z}$ the system $\operatorname{sinc}(t-\lambda_n)$. From Theorem \ref{t:alv}, the sequence $\{x_n\}_{n\in\mathbb Z}$ forms a Riesz basis.

Recall that a sequence $\{x_n\}_{n\in\mathbb Z}$ in an Hilbert space $H$ is a Riesz basis if and only if any element $x\in H$ has a unique expansion $x=\sum_{n\in\mathbb Z} c_n x_n$ with $\{c_n\}_{n\in\mathbb Z}\in \ell^2$. If $\{x_n\}_{n\in\mathbb Z}$ is a Riesz basis, then in the above expansion the Fourier coefficients $c_n$ are given by $c_n=\langle x, x_n'\rangle$, where $\{x_n'\}_{n\in\mathbb Z}$ is a system biorthogonal to $\{x_n\}_{n\in\mathbb Z}$, i.e., a system which satisfies the condition $\langle x_k, x_n'\rangle=\delta_{k,n}$ for all $k$, $n\in\mathbb Z$. 
\end{proof}

\subsection{Proof of Theorem \ref{t:alv11}}
In order to prove Theorem \ref{t:alv11}, we prove the following Lemma.
\begin{lemma}
\label{t:alv1}
Define
$$\mathcal{I}=\left\|\sum_{n} c_n \left[\operatorname{sinc}(\lambda_n-t) - \operatorname{sinc}(n-t)\right]\right\|_{L^2(\mathbb R)}^2.$$
Then
\begin{equation}
\label{eq:5t}
\mathcal{I}\leq2 \sum_n \left[1-\operatorname{sinc}(\lambda_n-n)\right], \ \ n=0, \pm 1, \pm 2, \dots .
\end{equation}
\end{lemma}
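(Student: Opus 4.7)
The plan is to move to the Fourier side, where the translates of $\operatorname{sinc}$ become characters multiplied by the indicator of $[-\pi,\pi]$, and then reduce the $L^{2}$ norm of the sum to single-index pieces by Minkowski followed by Cauchy--Schwarz. Concretely, I would start from the identity $\F(\operatorname{sinc}(\mu-\cdot))(\xi)=e^{-i\mu\xi}\chi_{[-\pi,\pi]}(\xi)$ (a shift of the elementary pair $\F(\operatorname{sinc})=\chi_{[-\pi,\pi]}$, which already appears in the proof of Proposition \ref{t:alv}) together with Plancherel's theorem, to rewrite
$$\mathcal{I}=\frac{1}{2\pi}\left\|\sum_n c_n\,h_n\right\|_{L^{2}(-\pi,\pi)}^{2},\qquad h_n(\xi):=e^{-i\lambda_n\xi}-e^{-in\xi}.$$

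The next step is the key single-index computation, which reduces to a trigonometric integral:
$$\|h_n\|_{L^{2}(-\pi,\pi)}^{2}=\int_{-\pi}^{\pi}2\bigl(1-\cos((\lambda_n-n)\xi)\bigr)\,d\xi=4\pi\bigl(1-\operatorname{sinc}(\lambda_n-n)\bigr).$$
Applying Minkowski's inequality in $L^{2}(-\pi,\pi)$ to pull the sum outside the norm, and then Cauchy--Schwarz on the resulting numerical series, one obtains
$$\left\|\sum_n c_n\,h_n\right\|_{L^{2}(-\pi,\pi)}^{2}\leq 4\pi\Bigl(\sum_n|c_n|^{2}\Bigr)\sum_n\bigl(1-\operatorname{sinc}(\lambda_n-n)\bigr),$$
and dividing by $2\pi$ yields (\ref{eq:5t}) under the normalisation $\sum_n|c_n|^{2}\leq 1$ that is implicit when the lemma is invoked to verify the Paley--Wiener criterion of Theorem \ref{th:2}.

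The main subtlety is a modelling choice rather than a hard estimate. A tempting alternative is to expand $|\sum_n c_n h_n|^{2}$ pointwise and integrate term by term; however, the off-diagonal integrals $\int_{-\pi}^{\pi}h_n\overline{h_m}\,d\xi$ are not controllable in sign and can dominate the diagonal, so a Bessel-type diagonal bound is not available directly. The Minkowski--Cauchy-Schwarz route above bypasses the cross-terms entirely, leaving only routine bookkeeping of the $2\pi$ factors coming from the Plancherel normalisation used throughout the paper.
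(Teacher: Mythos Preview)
Your argument is correct and yields exactly the inequality (\ref{eq:5t}). The route, however, differs from the paper's. You pass immediately to the Fourier side via Plancherel, apply Minkowski plus Cauchy--Schwarz to decouple the sum over $n$, and then evaluate the single-index norm $\|h_n\|_{L^{2}(-\pi,\pi)}^{2}$ by the elementary cosine integral. The paper instead stays in the spatial picture: it expands each $\operatorname{sinc}(\lambda_n-t)$ in the orthonormal basis $\{\operatorname{sinc}(k-t)\}_{k\in\mathbb Z}$, applies Parseval to turn $\mathcal I$ into an $\ell^{2}$ sum over $k$, applies Cauchy--Schwarz in the $n$-sum for each fixed $k$, and then swaps the order of summation and uses the identity $\sum_{k}\operatorname{sinc}^{2}(\lambda_n-k)=1$ to collapse the inner sum to $2(1-\operatorname{sinc}(\lambda_n-n))$. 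The two computations meet at the same single-index fact, namely $\|\operatorname{sinc}(\lambda_n-\cdot)-\operatorname{sinc}(n-\cdot)\|_{L^{2}(\mathbb R)}^{2}=2(1-\operatorname{sinc}(\lambda_n-n))$; you obtain it by a trigonometric integral, the paper by an $\ell^{2}$ telescoping against the sampling identity. Your version is shorter and makes the role of the normalisation $\sum_n|c_n|^{2}\le 1$ more transparent; the paper's version has the mild pedagogical advantage of never leaving the sinc basis, keeping the whole argument inside $PW_\pi$ without invoking the exponential system.
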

\begin{proof}
Write,
\begin{equation}
\label{eq:0b}
\mathcal{I}=\left\|\sum_{n} c_n \left[\operatorname{sinc}(\lambda_n-t) - \operatorname{sinc}(n-t)\right]\right\|_{L^2(\mathbb R)}^2.
\end{equation}
First, we develop function $\operatorname{sinc}(\lambda_n-t)$ respect to basis $\{\operatorname{sinc}(\lambda_n-t)\}_{n\in\mathbb Z}$. We find:
\begin{equation}
\label{eq:0}
\operatorname{sinc}(\lambda_n-t)=\sum_{k\in\mathbb Z} \operatorname{sinc}(\lambda_n-k) \operatorname{sinc}(k-t)
\end{equation}
The convergence in $L^2(\mathbb R)$ is insured by
\begin{equation}
\label{eq:1}
\sum_{k\in\mathbb Z} \operatorname{sinc}^2(\lambda_n-k)= \int_{\mathbb R} \operatorname{sinc}^2(\lambda_n-t) dt=1
\end{equation}
Thanks to equation (\ref{eq:0}) we obtain:
\begin{equation}
\label{eq:2}
\sum_{n} c_n \left[\operatorname{sinc}(\lambda_n-t) - \operatorname{sinc}(n-t)\right]=\sum_{n} c_n \sum_{k\in\mathbb Z} \left[\operatorname{sinc}(\lambda_n-k) - \operatorname{sinc}(n-k)\right] \operatorname{sinc}(k-t).
\end{equation}
This transformation is obvious because
$$\operatorname{sinc}(n-k)=\begin{cases}
0 \ \ \text{for}\ n,k\in\mathbb Z\ \text{and}\ n\neq k \\
1 \ \ \text{for}\ n=k. \\
\end{cases}$$
We obtain, substituting in (\ref{eq:0b}):
\begin{align}
\mathcal{I}&=\left\|\sum_{n} c_n \sum_{k\in\mathbb Z} \left[\operatorname{sinc}(\lambda_n-k) - \operatorname{sinc}(n-k)\right] \operatorname{sinc}(k-t)\right\|_{L^2(\mathbb R)}^2 \notag \\
&=\left\| \sum_{k\in\mathbb Z} \left\{\sum_{n} c_n \left[\operatorname{sinc}(\lambda_n-k) - \operatorname{sinc}(n-k)\right]\right\} \operatorname{sinc}(k-t)\right\|_{L^2(\mathbb R)}^2
\end{align}
Applying the Parseval equality,
$$\mathcal{I}=\sum_{k\in\mathbb Z} \left|\sum_{n} c_n \left[\operatorname{sinc}(\lambda_n-k) - \operatorname{sinc}(n-k)\right] \right|^2$$
Using H\"{o}lder-Schwarz to the sum of products contained in the absolute value, and the condition on $\sum_n |c_n|^2\leq 1$ we have:
\begin{align}
\label{eq:3}
\mathcal{I}&\leq\sum_{k\in\mathbb Z} \sum_{n}\left[\operatorname{sinc}(\lambda_n-k) - \operatorname{sinc}(n-k)\right]^2 = \notag \\
&= \sum_{n} \sum_{k\in\mathbb Z\backslash \{n\}}\left[ \operatorname{sinc}^2(\lambda_n-k) +\left(\operatorname{sinc}(\lambda_n-n) - 1\right)^2\right]
\end{align}
From (\ref{eq:1}),
\begin{equation}
\label{eq:4}
\sum_{k\in\mathbb Z\backslash \{n\}} \operatorname{sinc}^2(\lambda_n-k)=1-\operatorname{sinc}^2(\lambda_n-n)
\end{equation}
Finally, from (\ref{eq:3}) and (\ref{eq:4}), we obtain 
\begin{equation}
\label{eq:5}
\mathcal{I}\leq2 \sum_n \left[1-\operatorname{sinc}(\lambda_n-n)\right].
\end{equation}
\end{proof}

\begin{proof}[Proof of Theorem \ref{t:alv11}]
Let us consider
$$\mathcal{I}\leq2 \sum_n \left[1-\operatorname{sinc}(\lambda_n-n)\right]$$
for $\lambda_n-n=\frac{A}{n^\alpha}$:
$$\mathcal{I}\leq2 \sum_n \left[1-\operatorname{sinc}\left(\frac{A}{n^\alpha}\right)\right]$$
Since, for $x\in(0,\pi/2)$
$$\left|1-\frac{\sin x}{x}\right|=\left|\frac{\sin x}{x}\right| \left|\frac{x}{\sin x} - 1 \right|\leq \frac{1}{\cos x} - 1=\frac{\sin^2 x}{\cos x(1+\cos x)}\leq \frac{\sin^2 x}{\cos x},$$
we have:
$$\mathcal{I}\leq2 \sum_n \left[1-\operatorname{sinc}\left(\frac{A}{n^\alpha}\right)\right]\leq 2 \sum_n \frac{\sin^2\left(\frac{\pi A}{n^\alpha}\right)}{\cos \left(\frac{\pi A}{n^\alpha}\right)}.$$
It is verified if $\frac{\pi A}{n^\alpha}\in\left(0,\frac{\pi}{2}\right)$. Let, for example, $\frac{\pi A}{n^\alpha}\in\left(0,\frac{\pi}{4}\right]$. Hence,
$$\mathcal{I}\leq 2 \sum_n \frac{\sin^2\left(\frac{\pi A}{n^\alpha}\right)}{\cos \left(\frac{\pi A}{n^\alpha}\right)}\leq 2 \sqrt{2} (\pi A)^2 \sum_n \frac{1}{n^{2\alpha}}= 2 \sqrt{2} (\pi A)^2 \zeta(2\alpha).$$
Then
$$\mathcal{I}\leq 2 \sqrt{2} (\pi A)^2 \zeta(2\alpha)<1$$
if $$A<\frac{1}{\pi\sqrt{2 \sqrt{2}\zeta(2\alpha)}}.$$
Moreover, $\zeta(2\alpha)<1$ and, under condition on $A$ and for $\alpha>1/2$, it is confirmed that $\frac{\pi A}{n^\alpha}\in\left(0,\frac{\pi}{4}\right]$.
\end{proof}

\subsection{Proof of Theorem \ref{t:1}.}
\label{sec:complex}

In this Section we study the system $\{\operatorname{sinc}(\lambda_n-t)\}_{n\in\mathbb Z}$ for $\lambda_n\in\mathbb C$ and $|\lambda_n-n|\leq L<\infty$. First, we state the following Lemma whose proof is left to the reader.
\begin{lemma}
\label{l:0}
Let $n$, $k\in\mathbb N$.
The Fourier transform of the function
$ t\to \frac{d^k}{dt^k}\operatorname{sinc}(t-n)$
is $\xi \to (i\xi)^k \chi_{[-\pi,\pi]}(t)\, e^{-in\xi}$.
\end{lemma}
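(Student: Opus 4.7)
The plan is to combine two ingredients: the explicit Fourier pair for translates of the sinc function (which is essentially contained in the calculation already displayed in the proof of Proposition \ref{t:alv}), and the standard differentiation rule for the Fourier transform.

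For the first ingredient, I would start from the identity
$$\mathcal{F}(e^{it\mu}\chi_{[-\pi,\pi]}(t))(\xi) = \int_{-\pi}^{\pi} e^{i(\mu-\xi)t}\,dt = 2\pi\operatorname{sinc}(\mu-\xi)$$
already stated in Section \ref{sec:intro}. Applying Fourier inversion (which, in the authors' convention $\mathcal{F}(f)(\xi)=\int f(t)e^{-i\xi t}dt$, reads $\mathcal{F}\mathcal{F}(g)(t)=2\pi g(-t)$) gives $\mathcal{F}(\operatorname{sinc}(\mu-t))(\xi) = \chi_{[-\pi,\pi]}(\xi)e^{-i\mu\xi}$; setting $\mu=n$ and using that $\operatorname{sinc}$ is an even function yields the base case
$$\mathcal{F}(\operatorname{sinc}(t-n))(\xi) = \chi_{[-\pi,\pi]}(\xi)\,e^{-in\xi}.$$
The same conclusion can be obtained directly by the substitution $s=t-n$, which reduces the transform to $e^{-in\xi}\int_{\mathbb R}\operatorname{sinc}(s)e^{-i\xi s}\,ds$, and the last integral equals $\chi_{[-\pi,\pi]}(\xi)$ by the standard band-limited Fourier pair.

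For the second ingredient, I would apply the differentiation rule $\mathcal{F}(f^{(k)})(\xi)=(i\xi)^k\mathcal{F}(f)(\xi)$ to $f(t)=\operatorname{sinc}(t-n)$. Combining with the base case yields
$$\mathcal{F}\!\left(\tfrac{d^k}{dt^k}\operatorname{sinc}(t-n)\right)(\xi) = (i\xi)^k\chi_{[-\pi,\pi]}(\xi)\,e^{-in\xi},$$
which is exactly the statement of the lemma. The only mild obstacle is the justification of the differentiation rule: since $\operatorname{sinc}(t-n)\notin L^1(\mathbb R)$, one cannot simply integrate by parts pointwise. I would circumvent this by arguing on the Fourier side: the function $(i\xi)^k\chi_{[-\pi,\pi]}(\xi)e^{-in\xi}$ is bounded and compactly supported, hence lies in $L^2(\mathbb R)$, so its inverse Fourier transform is a well-defined element of $PW_\pi$; an easy induction on $k$ (or equivalently, a density argument using Schwartz functions, or the tempered-distribution framework) then identifies it with $\frac{d^k}{dt^k}\operatorname{sinc}(t-n)$. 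All other steps are routine bookkeeping of constants and signs.
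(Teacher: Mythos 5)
Your proof is correct. The paper itself offers no proof to compare against --- Lemma \ref{l:0} is explicitly ``left to the reader'' --- and your argument is the natural one: establish $\mathcal{F}(\operatorname{sinc}(\cdot-n))(\xi)=\chi_{[-\pi,\pi]}(\xi)e^{-in\xi}$ (which also silently corrects the paper's typo $\chi_{[-\pi,\pi]}(t)$), then apply the differentiation rule, with the non-$L^1$ issue properly handled by working on the Fourier side (equivalently, by differentiating $\operatorname{sinc}(t-n)=\frac{1}{2\pi}\int_{-\pi}^{\pi}e^{i\xi(t-n)}\,d\xi$ under the integral sign, which is legitimate over a compact interval). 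No gaps.
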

The following is a stability result for $\{\operatorname{sinc}(\lambda_n-t)\}_{n\in\mathbb Z}$ when $\lambda_n\in\mathbb C$. The result involves the Lamb-Oseen constant, as already announced in \cite{Ava}. However the stability bound for the case $\lambda_n\in\mathbb C$ is $\frac{1}{\pi}\, \sqrt\frac{3\alpha}{8}$ (and not $\frac{\alpha}{\pi}$, as written in \cite{Ava}).
\begin{proof} [Proof of Theorem \ref{t:1}]
Let us consider,
\begin{equation}
\label{eq:0bb}
\mathcal{I}=\left\|\sum_{n} c_n \left[\operatorname{sinc}(\lambda_n-t) - \operatorname{sinc}(n-t)\right]\right\|_{L^2(\mathbb R)}^2.
\end{equation}
whenever $\sum_n |c_n|^2\leqq 1$. We use the Taylor series of $\operatorname{sinc}( \lambda_n - t)$:
$$\operatorname{sinc}(n - t)+\sum_{k=1}^{+\infty}\frac{\left(\lambda_n-n\right)^k}{k!} \frac{d^k}{dx^k}\operatorname{sinc}(x-t) \Bigl|_{x=n}$$
Then
\begin{align}
\label{eq:14}
\mathcal{I}&=\left\|\sum_{n} c_n   \sum_{k=1}^{+\infty}\frac{\left(\lambda_n-n\right)^k}{k!} \frac{d^k}{dx^k}\operatorname{sinc}(x-t) \Bigl|_{x=n}  \right\|_{L^2(\mathbb R)}^2 \notag \\
&=\left\| \sum_{k=1}^{+\infty}\frac{1}{k!} \sum_{n} c_n   \left(\lambda_n-n\right)^k \frac{d^k}{dx^k}\operatorname{sinc}(x-t) \Bigl|_{x=n}  \right\|_{L^2(\mathbb R)}^2 \notag \\
&\leq \sum_{k=1}^{+\infty}\frac{1}{k!} \left\|\sum_{n} c_n   \left(\lambda_n-n\right)^k \frac{d^k}{dx^k}\operatorname{sinc}(x-t) \Bigl|_{x=n}  \right\|_{L^2(\mathbb R)}^2
\end{align}
The term $\|\cdot\|$ is reducible to
$$\left\|\sum_{n} c_n   \left(\lambda_n-n\right)^k \frac{d^k}{dx^k}\operatorname{sinc}(x-t) \Bigl|_{x=n}  \right\|_{L^2(\mathbb R)}^2=$$
$$=\sum_{n,m}a_n \overline{a_m}\ \left\langle \frac{d^k}{dx^k}\operatorname{sinc}(x-t) \Bigl|_{x=n}, \frac{d^k}{dx^k}\operatorname{sinc}(x-t) \Bigl|_{x=m} \right\rangle_{L^2(\mathbb R)}$$
where $a_n:=c_n\, \left(\lambda_n-n\right)^k$. Observing that
$$\frac{d^k}{dx^k}\operatorname{sinc}(x-t) \Bigl|_{x=n}=\begin{cases}
- \frac{d^k}{dt^k}\operatorname{sinc}(t-n), \ \ k\ \text{odd} \\
 \frac{d^k}{dt^k}\operatorname{sinc}(t-n),\ \ \ \ k\ \text{even} \\
\end{cases}$$
i.e., $\frac{d^k}{dx^k}\operatorname{sinc}(x-t) \Bigl|_{x=n}=(-1)^k \frac{d^k}{dt^k}\operatorname{sinc}(t-n)$. Then
\begin{align}
\label{eq:12}
&\left\langle \frac{d^k}{dx^k}\operatorname{sinc}(x-t) \Bigl|_{x=n}, \frac{d^k}{dx^k}\operatorname{sinc}(x-t) \Bigl|_{x=m} \right\rangle_{L^2(\mathbb R)}\notag \\
=&\int_{\mathbb R} (-1)^k \frac{d^k}{dt^k}\operatorname{sinc}(t-n) \, \overline{(-1)^k \frac{d^k}{dt^k}\operatorname{sinc}(t-m)}\, dt
\end{align}
From Plancherel's equality and Lemma \ref{l:0}, we have
$$\int_{\mathbb R} (-1)^k \frac{d^k}{dt^k}\operatorname{sinc}(t-n) \, \overline{(-1)^k \frac{d^k}{dt^k}\operatorname{sinc}(t-m)}\, dt=\frac{1}{2\pi}\int_{-\pi}^\pi \xi^{2k} \, e^{i(m-n)\xi}\, d\xi.$$
Hence,
\begin{equation}
\label{eq:13}
\left\langle \frac{d^k}{dx^k}\operatorname{sinc}(x-t) \Bigl|_{x=n}, \frac{d^k}{dx^k}\operatorname{sinc}(x-t) \Bigl|_{x=m} \right\rangle_{L^2(\mathbb R)}= \frac{1}{2\pi}\int_{-\pi}^\pi \xi^{2k} \, e^{i(m-n)\xi}\, d\xi.
\end{equation}
Taking equation (\ref{eq:13}) in (\ref{eq:14}), we obtain:
$$\mathcal{I}\leq \frac{1}{2\pi} \sum_{k=1}^{+\infty}\frac{1}{k!}\sum_{n,m}a_n \overline{a_m}\, \int_{-\pi}^\pi \xi^{2k} \, e^{i(m-n)\xi}\, d\xi:=\omega_1+\omega_2$$
where $\omega_1$, $\omega_2$ are the cases, respectively, when $n=m$ and $n\neq m$. Thereby,
\begin{equation}
\label{eq:15}
\omega_1= \frac{1}{\pi} \sum_{k=1}^{+\infty}\frac{1}{k!}\sum_{n}|a_n|^2 \ \int_{0}^\pi \xi^{2k} \, d\xi\leq \sum_{k=1}^{+\infty}\frac{(\pi L)^{2k}}{k!(2k+1)}.
\end{equation}
and using integration by parts for $\omega_2$, we see that
\begin{align}
\omega_2&= \frac{1}{2\pi} \sum_{k=1}^{+\infty}\frac{1}{k!}\sum_{\substack{n,m \\ n\neq m}} \frac{a_n\, \overline{a_m}}{i(m-n)}\, \int_{-\pi}^\pi \xi^{2k} \, \left[e^{i(m-n)\xi}\right]'\, d\xi \notag \\
&=\frac{1}{2\pi} \sum_{k=1}^{+\infty}\frac{1}{k!}\sum_{\substack{n,m \\ n\neq m}} \frac{a_n\, \overline{a_m}}{i(m-n)}\, \left[\left(\xi^{2k}\, e^{i(m-n)\xi}\right)_{-\pi}^\pi -2k\,\int_{-\pi}^\pi \xi^{2k-1} \, e^{i(m-n)\xi}d\xi\right]\notag \\
&=\frac{i}{\pi} \sum_{k=1}^{+\infty}\frac{1}{(k-1)!}\sum_{\substack{n,m \\ n\neq m}} \frac{a_n\, \overline{a_m}}{m-n}\,\int_{-\pi}^\pi \xi^{2k-1} \, e^{i(m-n)\xi}d\xi.
\end{align}
Putting double series into integral,
$$\omega_2=\frac{i}{\pi} \sum_{k=1}^{+\infty}\frac{1}{(k-1)!}\,\int_{-\pi}^\pi \xi^{2k-1} \, \sum_{\substack{n,m \\ n\neq m}} \frac{a_n e^{-in\xi}\, \overline{a_m e^{-im\xi}}}{m-n}d\xi$$
and denoting $b_n:=a_n e^{-in\xi}$, $\omega_2$ is estimable from above as:
$$|\omega_2|\leq\frac{1}{\pi} \sum_{k=1}^{+\infty}\frac{1}{(k-1)!}\,\int_{-\pi}^\pi |\xi|^{2k-1} \, \left|\sum_{\substack{n,m \\ n\neq m}} \frac{b_n \overline{b_m}}{m-n}\right|d\xi.$$
From Hilbert's inequality for the double series into the integral, we obtain
\begin{equation}
\label{eq:16}
|\omega_2|\leq\sum_{k=1}^{+\infty}\frac{1}{(k-1)!}\,\int_{-\pi}^\pi |\xi|^{2k-1} \, \sum_{n} \left|b_n\right|^2d\xi\leq \sum_{k=1}^{+\infty}\frac{(\pi L)^{2k}}{k!}.
\end{equation}
Then,
$$\mathcal{I}\leq |\omega_1|+|\omega_2|\leq \sum_{k=1}^{+\infty}\frac{(\pi L)^{2k}}{k!(2k+1)}+ \sum_{k=1}^{+\infty}\frac{(\pi L)^{2k}}{k!}=\sum_{k=1}^{+\infty}\frac{(\pi L)^{2k}}{k!}\left[1+\frac{1}{2k+1}\right].$$
We notice that
$$\frac{(\pi L)^{2k}}{k!}\left[1+\frac{1}{2k+1}\right]\leq \frac{(\pi L)^{2k}}{(k+1)!}\, \left(\frac{8}{3}\right)^{k}$$
for all $k\in\mathbb N$ and, in fact,
$$2\, \frac{k+1}{2k+1}\leq \frac{1}{k+1}\, \left(\frac{8}{3}\right)^{k}$$
is verified for all $k\in\mathbb N$. Only for $k=1$ the above inequality becomes an equality. Accordingly,
$$\mathcal{I}\leq \sum_{k=1}^{+\infty} \frac{x^k}{(k+1)!}=\frac{1}{x}\left(e^x-x-1\right), \ \ \text{where}\ x=\frac{8}{3}\, \pi^2 L^2.$$
Set $\lambda=\frac{1}{x}\left(e^x-x-1\right)$ where $x=\frac{8}{3}\, \pi^2 L^2$. In order to get $\lambda< 1$, we solve, in a first moment, the equation $\lambda=1$, that is
\begin{equation}\label{w1}
e^x=2x+1
\end{equation}
We obtain same useful properties on the solutions of equation (\ref{w1}), using the Lambert Function W.
\end{proof}
Numerical estimates give
$$\frac{1}{\pi}\, \sqrt\frac{3\alpha}{8}=0.218492\dots.$$


\section{Tables}
\label{sec:ex}
Let
\begin{equation}
\label{eq:5tt}
\mathcal{I}\leq2 \sum_n \left[1-\operatorname{sinc}(\lambda_n-n)\right]=2 \sum_n\, \sum_{l=1}^\infty (-1)^{l+1}\, \frac{[\pi(\lambda_n-n)]^{2l}}{(2l+1)!}
\end{equation}
In order to numerically valuate $\mathcal{I}$ let, for $n=1,2,3,\dots$,
$$\lambda_n-n=\frac{A}{n^\alpha}, \ \ \ \text{where} \ \alpha>\frac{1}{2}.$$
Substituting in (\ref{eq:5tt}) we obtain
$$\mathcal{I}\leq 2 \sum_n\, \sum_{l=1}^\infty (-1)^{l+1}\, \frac{(\pi A)^{2l}}{(2l+1)!}\, \frac{1}{n^{2l\alpha}}=2\sum_{l=1}^\infty (-1)^{l+1}\, \frac{(\pi A)^{2l}}{(2l+1)!}\, \zeta(2\alpha l)$$
where $\zeta(2l\alpha)$ is the Riemann zeta function. We have the estimate
$$\mathcal{I}\leq 2 \sum_{l=1}^\infty (-1)^{l+1}\, \frac{(\pi A)^{2l}}{(2l+1)!}+ 2 \sum_{l=1}^\infty (-1)^{l+1}\, \frac{(\pi A)^{2l}}{(2l+1)!}\, \left[\zeta(2\alpha l)-1\right]$$
$$=2\left(1-\operatorname{sinc} A\right)+2 \sum_{l=1}^\infty (-1)^{l+1}\, \frac{(\pi A)^{2l}}{(2l+1)!}\, \left[\zeta(2\alpha l)-1\right]$$
In the following we evaluate numerically the expression
$$\lambda:=2\left(1-\frac{\sin \pi A}{\pi A}\right)+2\sum_{l=1}^\infty (-1)^{l+1}\, \frac{\left(\pi A\right)^{2l}}{(2l+1)!}\left[\zeta(2l\alpha)-1\right].$$
Parameters $\alpha$ and $A$ derive from position $\lambda_n-n=\frac{A}{n^\alpha}$, for $\alpha>\frac{1}{2}$ and $A>0$. From Paley-Wiener $\lambda$ must be less than $1$. For a better comprehension we fix:
$$\lambda_1=2\left(1-\frac{\sin \pi A}{\pi A}\right), \ \ \ \lambda_2=2\sum_{l=1}^\infty (-1)^{l+1}\, \frac{\left(\pi A\right)^{2l}}{(2l+1)!}\left[\zeta(2l\alpha)-1\right]$$
Below we try with $A=0.25$ and varying $\alpha$.
$$\ $$
\begin{center}
\begin{tabular}{p{1.5 cm}p{1.5cm}p{1.5cm}p{1.5cm}p{1.5cm}}
$\alpha$&A&$\lambda_1$&$\lambda_2$&$\lambda$\\ \hline
0.7&0.25&0.199367&0.431376&0.630743\\ \hline
0.65&0.25&0.199367&0.600929&0.800296\\ \hline
0.63&0.25&0.199367&0.705618&0.904986\\ \hline
0.62&0.25&0.199367&0.771134&0.970502\\ \hline
0.61599&0.25&0.199367&0.800596&0.999963\\ \hline
\end{tabular}
\label{tab:1}
\end{center}
$$\ $$
Notice that for $\lambda_n-n=\frac{0.25}{n^\alpha}$ ($0.25$ is just the Kadec's bound for exponential bases), when we have, for example, $\alpha=0.7$ (first row of previous table) the parameter $\lambda$ is still far from $1$, which is the maximum value for $\lambda$ in the Paley-Wiener criterion. For decreasing value of $\alpha$, when $\alpha=0.61599$, $\lambda$ is very close to $1$. If $n=1,2,dots$, $\lambda_n-n\leq 0.25$ for all $n\in\mathbb N$.

We now fix $\alpha=1$ while $A$ is variable.
$$\ $$
\begin{center}
\begin{tabular}{p{1.5 cm}p{1.5cm}p{1.5cm}p{1.5cm}p{1.5cm}}
$\alpha$&A&$\lambda_1$&$\lambda_2$&$\lambda$\\ \hline
1&0.25&0.199367&0.132089&0.331456\\ \hline
1&0.35&0.379336&0.257921&0.637257\\ \hline
1&0.4&0.486347&0.336085&0.822432\\ \hline
1&0.42&0.531859&0.370154&0.902013\\ \hline
1&0.44&0.578765&0.405809&0.984574\\ \hline
1&0.44366&0.587491&0.412505&0.999996\\ \hline
\end{tabular}
\label{tab:2}
\end{center}
$$\ $$

At this point, we have $\lambda_n-n=\frac{A}{n}$. When $A=0.25$ (first row of previous table) the parameter $\lambda$ is still far from value $1$ of $\lambda$ in the Paley-Wiener criterion. For increasing value of $A$, when $A\simeq 0.443\dots$, $\lambda$ is very close to $1$. If $n=1,2,\dots$, $\lambda_n-n\leq L$ where $L$ seems to be approximately $0.443\dots$, which is greater of Kadec's bound. We have completed here the study announced in \cite{Ava}, giving a whole proof for stability of sinc bases.

\begin{figure}[tb]
\centering
\includegraphics[scale=0.80]{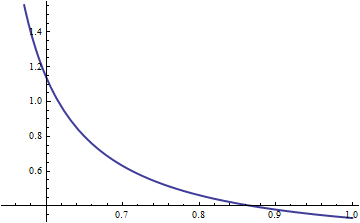}
\caption{Plot of $2\left(1-\frac{\sin \pi A}{\pi A}\right)+2\sum_{l=1}^\infty (-1)^{l+1}\, \frac{\left(\pi A\right)^{2l}}{(2l+1)!}\left[\zeta(2l\alpha)-1\right]$. Here $A=0.25$, horizontal axis is referred to $\alpha$ and the graphics is obtained in the range $\alpha\in[0.55,1]$. }
\label{fig:L_4}
\end{figure}


\begin{thebibliography}{10}
\bibitem{Ava}
A. Avantaggiati, P. Loreti, P. Vellucci, An explicit bound for stability of Sinc Bases, Proceedings of 12-th International Conference on Informatics in Control, Automation and Robotics, (2015), p. 473-480.
\bibitem{Avd}
S.A. Avdonin, On the question of Riesz bases of exponential functions in $L^2$, Vestnik Leningrad. Univ. No. 13 Mat. Meh. Astronom. Vyp. 3, 1974, 5-12.
\bibitem{Bai}
B. Bailey, Sampling and recovery of multidimensional bandlimited functions via frames, J. Math. Anal. Appl. 367, 2010, 374–388.
\bibitem{Gelf}
A.~Baker, Transcendental number theory. Cambridge University Press, 1990.
\bibitem{Chr10}
O. Christensen, Functions, spaces, and expansions: mathematical tools in physics and engineering. Springer Science $\&$ Business Media, 2010.
\bibitem{Co96}
R.M.~Corless, G.H.~Gonnet, D.E.G.~Hare, D.J.~Jeffrey, D.E.~Knuth, On the Lambert W function. Advances in Computational mathematics 5.1 (1996): 329-359.
\bibitem{2:2}
R.J.~Duffin, J.J.~Eachus, Some notes on an expansion theorem of Paley and Wiener. Bull. Am. Math. Soc. 48, 850-855, (1942).
\bibitem{Hayes}
B.~Hayes, Computing Science: Why W?, American Scientist (2005): 104-108.
\bibitem{Hig85}
J.R. Higgins, Five short stories about the cardinal series, Bulletin of the American Mathematical Society 12.1 (1985): 45-89.
\bibitem{Ing} A. E. Ingham, Some trigonometrical inequalities with applications to the theory of series, Math. Z. 41 (1936), 367--379.
\bibitem{Je77}
A.J. Jerri, The Shannon sampling theorem—Its various extensions and applications: A tutorial review, Proceedings of the IEEE 65.11 (1977): 1565-1596.
\bibitem{Kad}
M.I. Kadec, The exact value of the Paley–Wiener constant, Sov. Math. Dokl. 5, 1964, 559–561.
\bibitem{Khr}
S. V. Khrushchev, Perturbation theorems for exponential bases and the Mackenhoupt condition, Dokl. Akad. Nauk SSSR, 247, 1979, 44–48.
\bibitem{Ko33}
V.A. Kotelnikov, On the carrying capacity of the ether and wire in telecommunications, Material for the First All-Union Conference on Questions of Communication, Izd. Red. Upr. Svyazi RKKA, Moscow. Vol. 1. 1933.
\bibitem{Lu92}
J. Lund, K.L. Bowers, Sinc methods for quadrature and differential equations, SIAM, 1992.
\bibitem{Man11}
D.G. Manolakis, V.K. Ingle, Applied digital signal processing: theory and practice. Cambridge University Press, 2011.
\bibitem{Oseen}
Oseen, C. W. \"{U}ber Wirbelbewegung in einer reibenden Fl\"{u}ssigkeit. Almqvist $\&$ Wiksells, 1911.
\bibitem{Pal}
R. Paley and N. Wiener, Fourier transforms in the complex domain, Amer. Math. Soc. Colloquium Publications vol. 19. Amer. Math. Soc., New York, 1934.
\bibitem{Pav2}
B.S. Pavlov, Basicity of an exponential system and Muckenhoupt's condition, Sov. Math. Dokl. 20, 1979, 655-659.
\bibitem{Sed}
A. M. Sedletskii, Nonharmonic Analysis, J. Math. Sc., Vol. 116, No. 5, 2009, 3551-3619.
\bibitem{Sha}
C.E. Shannon, A mathematical theory of communication,  Bell System Technical Journal, Vol. 27 (1948), pp. 379-423; pp. 623-656.
\bibitem{Ste00}F. Stenger, Summary of Sinc numerical methods, Journal of Computational and Applied Mathematics 121.1 (2000): 379-420.
\bibitem{Ste05}
S.~Stewart, A New Elementary Function for Our Curricula? Australian Senior Mathematics Journal 19.2 (2005): 8-26.
\bibitem{Sun}
W. Sun, X. Zhou, On the stability of multivariate trigonometric systems, J. Math. Anal. Appl. vol. 235, 1999, 159–167.
\bibitem{Ul}
D. Ullrich, Divided differences and systems of nonharmonic Fourier series, Proc. Amer. Math. Soc., vol. 80, 1980, 47-57.
\bibitem{Vel}
P. Vellucci, A simple pointview for Kadec-1/4 theorem in the complex case, Ricerche di Matematica, 2014, DOI: 10.1007/s11587-014-0217-5.
\bibitem{Whi15}
E.T. Whittaker, On the functions which are represented by the expansions of the interpolation theory, Proc. Royal Soc. Edinburgh, 35, (1915) 181-194.
\bibitem{Whi64}
J.M. Whittaker, Interpolatory function theory, Cambridge Tracts in Mathematics and Mathematical Physics, No. 33, Stechert-Hafner, Inc., New York, 1964.
\bibitem{You}
R. M. Young, An introduction to nonharmonic Fourier series, Academic Press, 2001.
\end{thebibliography}
\end{document}